\newcommand{\be}{\begin{equation}}
\newcommand{\ee}{\end{equation}}
\newcommand{\beq}{\begin{eqnarray}}
\newcommand{\eeq}{\end{eqnarray}}
\def\begeq{\begin{equation}}
\def\endeq{\end{equation}}
\def\odot{\setbox0=\hbox{$\bigcirc$}\relax \mathbin {\hbox
to0pt{\raise.5pt\hbox to\wd0{\hfil $\wedge$\hfil}\hss}\box0 }}
\numberwithin{equation} {section}
\numberwithin{equation}{section}
\newtheorem{theorem}{\bf Theorem}[section]
\newtheorem{definition}[theorem]{\bf Definition}
\newtheorem{lemma}[theorem]{\bf Lemma}
\newtheorem{example}[theorem]{\bf Example}
\newtheorem{corollary}[theorem]{\bf Corollary}
\newtheorem{conjecture}[theorem]{\bf Conjecture}
\begin{document}

\title[Rigidity theorems
for the entire solutions of 2-Hessian equation]
{Rigidity theorems for the entire solutions of 2-Hessian equation}

\author{Li Chen and Ni Xiang$^\ast$}
\address{
Faculty of Mathematics and Statistics, Hubei University, Wuhan
430062, China. } \email{chernli@163.com, nixiang@hubu.edu.cn}

\thanks{This research was also supported in part by funds from Hubei Provincial Department of Education and Hubei Key
Laboratory of Applied Mathematics (Hubei University).}

\thanks{$\ast$ the corresponding author}

\date{}
\begin{abstract}
In this paper, we prove some rigidity theorems
for the entire 2-convex solutions of 2-Hessian equation in Euclidean space.
As an application, we obtain a Bernstein type theorem for global special Lagrangian graphs.
\end{abstract}

\maketitle {\it \small{{\bf Keywords}: Rigidity theorem,
2-Hessian equation, 2-convex.}

{{\bf MSC}: Primary 35J60, Secondary
35B45.}
}

\section{Introduction}

It is very interesting to consider the Liouville theory for the entire solutions
$u$ in $n$-dimensional Euclidean spaces
of the following equations,
\begin{eqnarray}\label{sg_k}
\sigma_ {k} (D^2 u(x)) = 1.
\end{eqnarray}
Here $\sigma_{k}(D^2 u(x))$ is the $k$-Hessian operator of $u$ and is defined as follows. Let $\sigma_k(\lambda)$ be the
$k$-th elementary symmetric function of $\lambda \in \mathbb{R}^n$.
Then $\sigma_ {k} (D^2 u(x))=\sigma_ {k} (\lambda[D^2 u])$, where
$\lambda[D^2 u]$ are the eigenvalues of the
Hessian matrix, $D^2 u$, of a function $u$ defined in $\mathbb{R}^n$.
Alternatively, it can be written as the sum of the $k \times k$
principal minors of $D^ 2 u$.

To ensure the ellipticity of \eqref{sg_k}, we have to restrict the class of functions
and domains.
\begin{definition}
A function $u \in C^2(\mathbb{R}^n)$ is called $k$-convex
if $\lambda [D^2 u]=(\lambda_1[D^2 u], ..., \lambda_n[D^2 u])$
 belongs to $\Gamma_k$ for all $x \in \mathbb{R}^n$, where $\Gamma_k$ is the Garding's cone
\begin{eqnarray*}\label{cone}
\Gamma_{k}=\{\lambda \in \mathbb{R} ^n: \sigma_{j}(\lambda)>0, \forall 1\leq j \leq k\}.
\end{eqnarray*}
\end{definition}

We review some rigidity theorems
for the entire solutions of the above equations. For $k = 1$, \eqref{sg_k} is a linear
equation.  Its entire convex solution must be a quadratic polynomial.
For $k = n$,  the Monge-Amp\`ere equation, a well-known theorem due to
J$\ddot{o}$rgens \cite{Jo} ($n=2$),  Calabi \cite{Ca} ($n = 3, 4, 5$) and Pogorelov \cite{Po1} \cite{Po2}
($n\geq 2$) asserts that that any entire strictly convex solution must be a
quadratic polynomial. A simpler and more analytical proof, along the lines of affine geometry, of the
theorem was later given by Cheng and Yau \cite{CYau}. It was proven by Trudinger and
Wang \cite{TrW} that the only open convex subset $\Omega$ of $\mathbb{R}^n$ which admits a convex $C^2$
solution of $\det(D^2u) = 1$ in $\Omega$ with $\lim_{x\rightarrow\partial \Omega} u(x) = \infty$ is $\Omega= \mathbb{R}^n$.
In 2003, Caffarelli and Li, \cite{CL} extended the theorem of J$\ddot{o}$rgens, Calabi and Pogorelov based on the theory of Monge-Amp\`ere equations \cite{Ca1,Ca2}.

For $k=2$, Chang and Yuan \cite{CY} have proved that, if the lower bound
\begin{eqnarray*}
D^2 u(x)\geq \bigg[\delta-\sqrt{\frac{2}{n(n-1)}}\bigg] I
\end{eqnarray*}
for any $\delta>0$, then the entire convex solutions of the equation \eqref{sg_k} must be quadratic
polynomials. And they also guess their result should still be true under the semiconvexity assumption
$D^2 u\geq-K I$ with arbitrarily large $K$, even for general equation \eqref{SQ}
with $2\leq k \leq n-1$. Their conjecture holds true in the case when $n=3$ and $k=2$ (see Theorem
1.3 in \cite{Yu}). Here a different transformation and the geometric measure theory were
employed.

It would be interesting to see if the rigidity theorem remains valid under some assumptions for general $k$.  Bao, Chen, Guan and Ji \cite{BC} obtained that, strictly convex entire solutions of \eqref{sg_k},
satisfying a quadratic
growth are quadratic polynomials. Here the quadratic growth is defined as follows,
\begin{definition}
A function $u: \mathbb{R}^n\rightarrow \mathbb{R}$ satisfies the quadratic growth if there are
some positive constants $b, c$ and sufficiently large $R$, such that,
\begin{eqnarray}\label{gro}
u(x)\geq b|x|^2-c, \quad \mbox{for} \quad |x|\geq R.
\end{eqnarray}
\end{definition}
In \cite{BC}, the authors raised a question if the rigidity theorem remains valid 
under weaker or without growth assumptions, or for $k$-convex solutions. 
Recently, Li, Ren and Wang \cite{LRW} have obtained that strictly convex assumption
in \cite{BC} can be reduced to $(k+1)$-convexity.

There are also some Liouville type theorems for complex Hessian equations. Dinew and Kolodziej \cite{DK}
have proved a Liouville type theorem for entire maximal $m$-subharmonic functions in $\mathbb{C}^n$
with bounded gradient recently. Li and Sheng \cite{LS} considered
complex Monge-Amp\`ere equations $\det(u_{i\overline{j}})=1$ in $\mathbb{C}^n$ and obtained
the Liouville theorem under the assumption of the quadric
growth
\begin{equation*}
C^{-1} ( 1 + |z|^2) \leq u \leq C (1 + |z| ^2),\ \ \mbox{as}\ \  |z| \rightarrow  \infty,
\end{equation*}
for some $C>0$.

Thus, it is interesting to ask whether we can relax $(k+1)$-convexity in \cite{LRW} further, even we reduce it to 
$k$-convexity? Fortunately, we can make some progresses on this problem
for the entire solutions of the 2-Hessian equation
\begin{equation}\label{SQ}
\sigma_2(D^2u(x))=1.
\end{equation}

Our main theorems are stated as follows.
\begin{theorem}\label{main}
Given any nonnegative constant $A$, any entire $2$-convex solution $u \in C^4(\mathbb{R}^n)$ of the
equation \eqref{SQ} define in $\mathbb{R}^n$
satisfying $\sigma_{3}(D^2 u(x))\geq -A$ and a quadratic growth \eqref{gro} is a quadratic polynomial.
\end{theorem}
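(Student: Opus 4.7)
My plan is to follow the two-step strategy of Bao--Chen--Guan--Ji \cite{BC} and Li--Ren--Wang \cite{LRW}: first establish an interior Pogorelov-type bound on $\lambda_{\max}(D^2u)$ on large balls, and then combine this bound with the quadratic growth \eqref{gro} and a Caffarelli--Li-style rescaling to force $u$ to be a quadratic polynomial. The whole novelty over \cite{LRW}, which assumed $(k+1)$-convexity, must lie in the Pogorelov step, where the $3$-convexity hypothesis $\sigma_3(D^2u) > 0$ is now to be replaced by the weaker one-sided bound $\sigma_3(D^2u) \geq -A$.

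For the Pogorelov estimate I would consider an auxiliary function of the form
\[
W(x) \;=\; \eta(x)^{\beta}\log\lambda_{\max}(D^2u(x)) \;+\; \varphi(|Du|^2) \;+\; \psi(u),
\]
where $\eta$ is a cutoff on a large ball $B_R$ and $\varphi,\psi$ are convex increasing functions to be chosen. Differentiating \eqref{SQ} twice along the maximal eigendirection and feeding the result into the linearized operator $F^{ij}\partial_i\partial_j$ with $F^{ij}=\partial\sigma_2/\partial u_{ij}$ produces good quadratic third-derivative terms, coming from the concavity of $\sigma_2^{1/2}$ on $\Gamma_2$. The dangerous terms, which in \cite{LRW} were killed by the positivity of $\sigma_3$, must here be controlled using $\sigma_3(D^2u) \geq -A$ together with the equality $\sigma_2(D^2u)=1$; taken jointly, these two constraints give, via the Newton--Maclaurin algebra on $\Gamma_2$, a quantitative two-sided control of the individual eigenvalues of $D^2u$ in terms of $\lambda_{\max}$, which is exactly what is needed to absorb the bad terms into the good ones after a suitable choice of $\beta,\varphi,\psi$. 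The outcome should be an estimate of the shape
\[
\lambda_{\max}\bigl(D^2u(0)\bigr) \;\leq\; C\!\left(n,A,R,\sup_{B_R}|u|,\sup_{B_R}|Du|\right).
\]

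Granted this Pogorelov bound, the rigidity step is largely routine: the quadratic growth \eqref{gro} combined with $\sigma_2(D^2u)=1$ yields $|Du(x)|=O(|x|)$, so the rescalings $u_R(x)=R^{-2}u(Rx)$ are locally uniformly bounded in $C^{1,1}$; Evans--Krylov promotes this to $C^{2,\alpha}_{\mathrm{loc}}$, and the Caffarelli--Li comparison argument from \cite{CL} identifies any blow-down limit as a quadratic polynomial, which then forces $u$ itself to be quadratic. The main obstacle is therefore Step 1: the lower bound $\sigma_3 \geq -A$ is substantially weaker than $3$-convexity because it permits $D^2u$ to have eigenvalue configurations outside $\Gamma_3$, and the pointwise algebra on $\Gamma_2 \setminus \Gamma_3$ that replaces the standard Newton--Maclaurin inequalities (which there run the wrong way) is the genuinely new piece of analysis needed, while the rest of the argument is standard machinery.
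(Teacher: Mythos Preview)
Your overall two-step strategy matches the paper's, and your intuition about the eigenvalue algebra is correct: the combination $\sigma_2=1$ with the one-sided bound $\sigma_3\geq -A$ does give two-sided control of the subordinate eigenvalues in terms of $\lambda_{\max}$, and this is exactly Lemma~\ref{le2} (imported from Guan--Qiu \cite{Guan1}), the replacement for the $3$-convexity used in \cite{LRW}. But there is a genuine gap in how you set up and then use the Pogorelov estimate. Your test function carries a ball cutoff $\eta$ and a gradient term $\varphi(|Du|^2)$, so the resulting bound depends on $\sup_{B_R}|u|$ and $\sup_{B_R}|Du|$. You then assert that ``the quadratic growth \eqref{gro} combined with $\sigma_2(D^2u)=1$ yields $|Du(x)|=O(|x|)$,'' but \eqref{gro} is only a \emph{lower} bound $u\geq b|x|^2-c$; nothing in the hypotheses furnishes an upper bound on $u$, and without one there is no interior gradient estimate to feed back into your Pogorelov inequality. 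The dependence on $|Du|$ therefore cannot be closed, and the rescaling step stalls.

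The paper avoids this by working on \emph{sublevel sets} rather than balls. With $u_R(y)=R^{-2}(u(Ry)-R^2)$ on $\Omega_R=\{y:u(Ry)<R^2\}$, the lower quadratic growth pins $\Omega_R$ inside a fixed ball independently of $R$, and $u_R=0$ on $\partial\Omega_R$. The test function in Lemma~\ref{le3} is $\alpha\log(-u_R)+\log u_{11}+\tfrac12|x|^2$: the barrier $\log(-u_R)$ replaces your cutoff, and crucially $\tfrac12|x|^2$ replaces your $\varphi(|Du|^2)$, so the Pogorelov estimate $(-u_R)^\alpha|D^2u_R|\leq C$ depends only on $n$, $A$, and $\mathrm{diam}(\Omega_R)$, with no gradient term entering at all. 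This is precisely what makes a one-sided quadratic growth hypothesis sufficient. Once the global Hessian bound is in hand, the paper finishes directly: Evans--Krylov gives $[D^2u]_{C^\alpha(B_R)}\leq CR^{-\alpha}$, and letting $R\to\infty$ forces $D^2u$ constant. The Caffarelli--Li machinery you invoke from \cite{CL} is Monge--Amp\`ere--specific and is neither needed nor obviously available for $\sigma_2$.
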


For $n=3$, the assumption $\sigma_{3}(D^2 u(x))\geq -A$ is redundant.
\begin{theorem}\label{main1}
Any entire $2$-convex solution $u \in C^4(\mathbb{R}^3)$ of the equation \eqref{SQ} define in $\mathbb{R}^3$
satisfying a quadratic growth \eqref{gro} is a quadratic polynomial.
\end{theorem}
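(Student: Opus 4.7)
The strategy is to reduce Theorem \ref{main1} to Theorem \ref{main} by showing that, in dimension three, the lower bound $\sigma_3(D^2u)\ge -A$ is automatic under the stated hypotheses. Once such an $A$ is exhibited, Theorem \ref{main} applied with $n=3$ immediately yields the conclusion.

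The first and main step is to establish a uniform global bound $|D^2u|\le C$ on $\mathbb{R}^3$. Dimension three is decisive here: on $\Gamma_2$ the tangent-sum identity
\begin{equation*}
\tan(\arctan\lambda_1+\arctan\lambda_2+\arctan\lambda_3)=\frac{\sigma_1-\sigma_3}{1-\sigma_2},
\end{equation*}
combined with $\sigma_1>0$, shows that $\sigma_2(D^2u)=1$ is equivalent to the special Lagrangian equation $\sum_i\arctan\lambda_i=\pi/2$ with the critical (largest) phase. For such potentials, interior Hessian estimates are available (in the spirit of Warren--Yuan), bounding $|D^2u|$ at a point by an expression involving $|Du|$ on a surrounding ball. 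To globalize this estimate I would rescale: $u_R(y):=R^{-2}u(Ry)$ is again a 2-convex solution of $\sigma_2(D^2u_R)=1$, and by the quadratic growth \eqref{gro} together with the Newton--Maclaurin inequality $\Delta u=\sigma_1\ge\sqrt{3}$ (hence subharmonicity), one can uniformly control $u_R$ and $|Du_R|$ on $B_1$ independently of $R$. Applying the interior estimate to $u_R$ at the origin and letting $R$ vary yields $|D^2u(x)|\le C$ for all $x\in\mathbb{R}^3$.

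Once $|D^2u|\le C$ is known, every eigenvalue of $D^2u$ lies in $[-C,C]$, and hence $\sigma_3(D^2u)=\lambda_1\lambda_2\lambda_3\ge -C^3=:-A$. All assumptions of Theorem \ref{main} are then satisfied in dimension three, and $u$ is a quadratic polynomial.

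The principal obstacle lies in the rescaling step: the interior Hessian estimate requires a two-sided pointwise bound on $u$ (or on $|Du|$) on the ambient ball, whereas \eqref{gro} only furnishes a lower quadratic bound. The missing upper bound must be extracted from the equation itself, for instance by constructing explicit radial supersolutions of $\sigma_2=1$ with quadratic growth and invoking the maximum principle on large balls, so that $|u_R|$ is uniformly controlled on $B_1$, a gradient bound follows from standard interior estimates, and the interior Hessian estimate then delivers the desired uniform bound on $|D^2u|$.
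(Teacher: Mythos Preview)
Your strategy differs from the paper's and carries a genuine gap. First, a structural point: if you succeed in proving $|D^2u|\le C$ on $\mathbb{R}^3$, the detour through $\sigma_3\ge -C^3$ and Theorem \ref{main} is redundant, since the closing step of that theorem's proof is precisely Evans--Krylov applied to a globally Hessian-bounded solution. The paper, incidentally, does not reduce to Theorem \ref{main} at all; it proves a separate Pogorelov estimate (Lemma \ref{le5}) resting on the three-dimensional inequality
\[
\sigma_2^{ij}(\log\Delta u)_{ij}\ \ge\ \tfrac{1}{25}\,\sigma_2^{ij}(\log\Delta u)_i(\log\Delta u)_j
\]
of Lemma \ref{le4} (after Qiu), which needs no hypothesis on $\sigma_3$, and then reruns the sublevel-set rescaling and Evans--Krylov argument from the proof of Theorem \ref{main} verbatim.

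The substantive gap is the one you yourself flag, and your proposed resolution does not close it. Interior Hessian estimates of Warren--Yuan type for the critical-phase special Lagrangian equation require a bound on $\|u\|_{L^\infty}$ or $\|Du\|_{L^\infty}$ on the ambient ball. With your rescaling $u_R(y)=R^{-2}u(Ry)$ on $B_1$, hypothesis \eqref{gro} furnishes only a lower bound $u_R\ge -cR^{-2}$; the subharmonicity $\Delta u\ge\sqrt{3}$ yields no pointwise upper bound over $B_1$. Your remedy---comparing $u$ with a radial quadratic supersolution on $B_R$ via the maximum principle---is circular: applying the comparison principle on $B_R$ requires $u\le w$ on $\partial B_R$, which is exactly the upper quadratic bound you are trying to establish. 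The paper evades this by rescaling on the sublevel set $\Omega_R=\{y:u(Ry)<R^2\}$ with $u_R(y)=R^{-2}(u(Ry)-R^2)$, so that $u_R=0$ on $\partial\Omega_R$ by construction; \eqref{gro} then bounds $\operatorname{diam}\Omega_R$, comparison with an explicit subsolution bounds $|u_R|$, and the Pogorelov weight $(-u_R)^\alpha$ in the test function does the work that a pure interior estimate cannot.
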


The following example which is given by Warren in \cite{Wa} shows that
the quadratic growth in Theorem \ref{main1} is necessary.
\begin{example}
When $n=3$ the non-polynomial function
$$u(x, y, t)=(x^2+y^2)e^{t}+\frac{1}{4}e^{-t}-e^{t}$$
is a $2$-convex solution of the equation \eqref{SQ}, but this function does not have a quadratic growth.
\end{example}

Therefore, it is very interesting to ask if the assumption $\sigma_{3}(D^2 u(x))\geq -A$ is redundant for any $n$. So we propose the following conjecture:
\begin{conjecture}
Any entire $2$-convex solution $u \in C^4(\mathbb{R}^n)$ of the
equation \eqref{SQ} define in $\mathbb{R}^n$
with a quadratic growth \eqref{gro} is a quadratic polynomial.
\end{conjecture}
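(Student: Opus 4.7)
The most natural plan is to reduce the Conjecture to Theorem \ref{main} by showing that, under $2$-convexity, $\sigma_2(D^2u)=1$, and the quadratic growth \eqref{gro}, the lower bound $\sigma_3(D^2u)\geq -A$ is automatic for some $A>0$. Indeed, since $\sigma_1(D^2u)>0$ and $\sigma_2(D^2u)=1$, any uniform pointwise upper bound on the largest eigenvalue $\lambda_{\max}(D^2u)$ forces all eigenvalues of $D^2u$ to lie in a compact subset of $\Gamma_2\cap\{\sigma_2=1\}$, from which $\sigma_3(D^2u)$ is uniformly bounded below. Theorem \ref{main} would then finish the proof.

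Thus the entire problem reduces to an a priori estimate: for every entire $C^4$ $2$-convex solution $u$ of \eqref{SQ} satisfying \eqref{gro}, prove that $\lambda_{\max}(D^2u)\leq C$ on $\mathbb{R}^n$ with $C$ depending only on $n,b,c,R$. I would try a Pogorelov-type maximum principle argument on an auxiliary function of the form
\[
W(x)=\eta(|x|)\log\lambda_{\max}(D^2u)(x)+\mu|Du(x)|^2,
\]
where $\eta$ is a standard cutoff of scale $R$ and $\mu>0$ is small. Computing $F^{ij}D_{ij}W$ with $F^{ij}=\partial\sigma_2/\partial u_{ij}$, using Ricci-type identities to handle $D_{ij}\lambda_{\max}$, and exploiting concavity of $\sigma_2^{1/2}$ on $\Gamma_2$, one would aim to show that at an interior maximum of $W$ the quantity $\lambda_{\max}$ is controlled by $R$ and $\sup_{B_R}|Du|^2$. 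The quadratic growth \eqref{gro} then provides the required outer control on $|Du|$ via the equation $\sigma_2=1$, and sending $R\to\infty$ yields a global bound.

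The hard part is precisely the step that motivated the hypothesis $\sigma_3(D^2u)\geq -A$ in Theorem \ref{main}. In the standard Pogorelov computation for $\sigma_2=1$ the ``good'' third-order term from concavity of $\sigma_2^{1/2}$ can be swamped by bad cross terms as soon as one eigenvalue of $D^2u$ is very negative: the linearized operator $F^{ij}$ becomes badly non-uniformly elliptic, and the ``concavity gain'' $\sum F^{ii,jj}u_{iin}u_{jjn}$ is no longer large enough to absorb terms of the form $-C\lambda_i|u_{ijn}|^2$ with $\lambda_i\ll 0$. The bound $\sigma_3\geq -A$ exactly controls the product of the smallest three eigenvalues and prevents this degeneration. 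Removing it seems to require either a genuinely new algebraic identity specific to $\sigma_2$ beyond $\Gamma_2$-concavity, or a blow-down strategy: rescale $u_R(x)=R^{-2}u(Rx)$, extract a homogeneous degree-$2$ tangent solution of $\sigma_2=1$ at infinity and argue it must be a quadratic polynomial, then propagate this to conclude via a Caffarelli--Yuan or Savin-type rigidity result. However, either formulation reduces to the same underlying analytic obstacle, namely a Pogorelov-type $C^2$ estimate for merely $2$-convex solutions of $\sigma_2=1$ without any further non-degeneracy condition, and in my view this is where essentially all the difficulty lies.
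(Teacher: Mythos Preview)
The statement you are attempting is labeled a \emph{Conjecture} in the paper and is explicitly posed as an open problem; the paper gives no proof of it. What the paper does prove is the case $n=3$ (Theorem~\ref{main1}), where the crucial ingredient is Lemma~\ref{le4}: in dimension three one has the pointwise inequality $\sigma_2^{ij}(\log\Delta u)_{ij}\geq \tfrac{1}{25}\sigma_2^{ij}(\log\Delta u)_i(\log\Delta u)_j$ for any $2$-convex solution of $\sigma_2=1$, and this replaces the role of the hypothesis $\sigma_3(D^2u)\geq -A$ in the Pogorelov argument. No analogous inequality is established for $n\geq 4$, which is precisely why the general statement remains conjectural.

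Your analysis is accurate as a diagnosis but does not constitute a proof, and you yourself acknowledge this in the last paragraph. A couple of specific comments:
\begin{itemize}
\item The reduction ``bound $\lambda_{\max}\Rightarrow$ bound $\sigma_3$ from below $\Rightarrow$ apply Theorem~\ref{main}'' is logically correct but circular in spirit: the proof of Theorem~\ref{main} is itself a Pogorelov estimate on $\lambda_{\max}$ that uses $\sigma_3\geq -A$ only to close the estimate. If you could bound $\lambda_{\max}$ directly, you would bypass Theorem~\ref{main} entirely and prove the conjecture outright; invoking Theorem~\ref{main} afterwards adds nothing.
\item Your proposed auxiliary function contains the term $\mu|Du|^2$, and you claim the quadratic growth~\eqref{gro} controls $|Du|$. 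But \eqref{gro} is only a \emph{lower} bound on $u$; it gives no upper bound on $u$ or on $|Du|$. In the paper's scheme (proof of Theorem~\ref{main}) the rescaled domains $\Omega_R$ have uniformly bounded diameter, and the comparison principle then bounds $|u_R|$, not $|Du_R|$; the paper's auxiliary function uses $\tfrac12|x|^2$, not $|Du|^2$, for exactly this reason.
\item The genuine obstruction you identify---that the concavity gain $-\sigma_2^{ij,kl}u_{ij1}u_{kl1}$ is insufficient to absorb the bad third-order terms when some eigenvalue is very negative---is exactly the point. In the paper this is handled for $n=3$ by the algebraic miracle of Lemma~\ref{le4} (following Qiu), and for general $n$ only under the extra hypothesis $\sigma_3\geq -A$ via Lemma~\ref{le2}. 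Neither your Pogorelov sketch nor your blow-down suggestion supplies the missing ingredient.
\end{itemize}
In short: there is nothing to compare your proposal to, since the paper offers no proof; and your proposal, while a reasonable outline of the difficulty, leaves the central analytic gap open, as you correctly note.
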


Equations (\ref{sg_k}) naturally appear in many interesting geometric problems such as Minkowski problem which is connected with Monge-Amp\`ere equation and the $k^{th}$-Weingarten curvature problem.
For $k = 1, 2 $ and $ n$, $\sigma _k$ corresponds to mean, scalar, and Gauss curvatures respectively.
When $n=3$ and $k=2$, the equation \eqref{sg_k} arises in special Lagrangian
geometry \cite{HL}. The special Lagrangian equation is
\begin{eqnarray}\label{Lag}
\sum_{i=1}^{n}\mbox{arctan} \lambda_i=\theta,
\end{eqnarray}
here $\lambda_i$ are eigenvalues of $D^2 u$ for
\begin{eqnarray*}
\theta \in (-\frac{n}{2}\pi, \frac{n}{2}\pi)
\end{eqnarray*}
a constant. Equation \eqref{Lag} originates from special Lagrangian geometry \cite{HL}. The
(Lagrangian) graph $(x, Du(x)) \subset \mathbb{R}^n\times \mathbb{R}^n$ is called special when the argument
of the complex number $(1+\sqrt{-1}\lambda_1)\cdot\cdot\cdot(1+\sqrt{-1}\lambda_n)$
is constant $\theta$ or $u$ satisfies \eqref{Lag}, and it is special if and only if $(x, Du(x))$ is a minimal
surface in $\mathbb{R}^n\times \mathbb{R}^n$, see Theorem 2.3 and Proposition 2.17 in \cite{HL}.

Before we state our corollary, we need to mention some related work on Bernstein type results
for global special Lagrangian graphs.
Fu \cite{Fu} showed that when $n=2$ and $\theta\neq 0$ all solutions of \eqref{Lag} are quadratic.
Jost-Xin \cite{JX} treated the problem using
a different method under the assumption that $|D^2u|$ is bounded.
When $n=2$ and $\theta=0$ the equation \eqref{Lag} becomes simply the Laplace equation,
which admits well-known non-polynomial solutions. Yuan \cite{Yu} showed that all
convex solutions to special Lagrangian equations are quadratic.

The critical phase for special Lagrangian equations is
$$|\theta|=\frac{n-2}{2}\pi.$$
Yuan \cite{Yu1} has shown that for $|\theta|>\frac{n-2}{2}\pi$, all entire solutions are
quadratic. In \cite{HL} when $n=3$, the critical equation
\begin{eqnarray*}
\sum_{i=1}^{n}\mbox{arctan} \lambda_i=\frac{\pi}{2}
\end{eqnarray*}
is equivalent to the equation \eqref{SQ}. Thus, we can get the following Bernstein type theorem from Theorem \ref{main1}.

\begin{corollary}
Suppose $M=(x, Du(x))$ is a minimal surface in $\mathbb{R}^3\times \mathbb{R}^3$ and
$u \in C^{4}(\mathbb{R}^3)$ is $2$-convex and satisfies a quadratic growth \eqref{gro},
then $M$ is a plane.
\end{corollary}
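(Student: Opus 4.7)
The plan is to convert the minimal-surface hypothesis into the $2$-Hessian equation \eqref{SQ} via the Harvey--Lawson correspondence and then apply Theorem~\ref{main1}.

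Because $M=(x,Du(x))$ is a gradient graph, it is automatically Lagrangian in $\mathbb{R}^{3}\times\mathbb{R}^{3}\simeq\mathbb{C}^{3}$ with respect to the standard symplectic form. By Theorem~2.3 and Proposition~2.17 of \cite{HL}, its minimality is equivalent to the existence of a constant phase $\theta$ with
\[
\sum_{i=1}^{3}\arctan\lambda_i[D^{2}u(x)]\equiv\theta\qquad\text{on }\mathbb{R}^{3}.
\]
As recalled just before the corollary, in dimension three the critical phase is $\theta=\pi/2$, and under the $2$-convexity hypothesis the tangent-addition identity
\[
\tan\Bigl(\sum_{i=1}^{3}\arctan\lambda_i\Bigr)=\frac{\sigma_{1}(D^{2}u)-\sigma_{3}(D^{2}u)}{1-\sigma_{2}(D^{2}u)}
\]
shows that $\theta=\pi/2$ is equivalent to $\sigma_{2}(D^{2}u)=1$ (the other value of $\theta\in(0,3\pi/2)$ compatible with the vanishing denominator, namely $\theta=3\pi/2$, would require each $\arctan\lambda_i\to\pi/2$ and is not attained). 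Thus $u$ is an entire $C^{4}$, $2$-convex solution of \eqref{SQ} on $\mathbb{R}^{3}$.

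With the assumed quadratic growth \eqref{gro} in hand, Theorem~\ref{main1} applies directly and forces $u$ to be a quadratic polynomial $u(x)=\tfrac{1}{2}x^{T}Ax+v\cdot x+c$ for some symmetric matrix $A$ and vector $v$. Then $Du(x)=Ax+v$ is affine, so
\[
M=\{(x,Ax+v):x\in\mathbb{R}^{3}\}
\]
is an affine $3$-plane in $\mathbb{R}^{6}$, i.e., a plane.

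The only nontrivial step is the identification of the constant phase as the critical one, handled by the Harvey--Lawson equivalence above; for a supercritical phase $|\theta|>\pi/2$ one would instead invoke Yuan's rigidity theorem \cite{Yu1} quoted in the introduction, so the Bernstein-type conclusion follows in either case.
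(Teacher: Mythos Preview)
Your argument is essentially the paper's: the corollary is obtained from Theorem~\ref{main1} via the Harvey--Lawson equivalence between the critical special Lagrangian equation $\sum_{i=1}^3\arctan\lambda_i=\pi/2$ and $\sigma_2(D^2u)=1$, and the paper states this as an immediate consequence without writing any further proof. Your closing paragraph on phase identification is more care than the paper provides; note, however, that $2$-convexity alone does not exclude a subcritical phase (e.g.\ $D^2u=\varepsilon I$ with small $\varepsilon>0$ lies in $\Gamma_2$ yet has $\theta<\pi/2$), so the clean reading---and the one the paper intends---is that the corollary concerns the critical branch, for which your reduction to Theorem~\ref{main1} is exactly right.
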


The main technique employed in this paper was motivated by the recent progresses on the interior estimates of
2-Hessian equation made by Guan and Qiu \cite{Guan1} \cite{Qiu}. In \cite{Guan1}, Guan and Qiu obtained
the interior estimates of
2-Hessian equation by exploiting some special properties of the eigenvalues of $D^2 u$
under the assumptions that
$u$ is 2-convex and $\sigma_3(D^2 u)\geq -A$. Using these special properties together
with Maximum Principle, we can obtain
Theorem \ref{main}. For $n=3$, Qiu \cite{Qiu} showed $\sigma_3(D^2 u)\geq -A$ is not
needed and the technique in \cite{Qiu} can be used to prove Theorem \ref{main1}.

\textbf{Acknowledgement:} The authors would like to thank Dr.Chuanqiang Chen for pointing out
some mistakes in the paper, bringing our attention to the very important examples in \cite{Wa}
and giving some suggestive comments, and the first author expresses
his deep gratitude to Prof. Guofang Wang for some important suggestions on this paper, and
he also thanks Prof. Yu Yuan for some comments.

\section{The proof of Theorem \ref{main}}

Let $W=(W_{ij})$ be a symmetric tensor and
$\sigma_{k}(W)=\sigma_k(\lambda[W])$, where $\lambda[W]$ denotes the eigenvalues of the
$W$. Similarly, we say $W \in \Gamma_2$ if $\lambda[W] \in \Gamma_2$, which also
means $\sigma_1(W)>0$, $\sigma_2(W)>0$. It follows from \cite{CNS}, if $W \in \Gamma_2$,
then $\sigma_{2}^{ij}=\frac{\partial \sigma_2}{\partial W_{ij}}(W)$ is positive definite.
We first recall the following important Lemma in \cite{CQ}.
\begin{lemma}\label{le1}
Suppose $W \in \Gamma_2$ is diagonal and $W_{11}\geq\cdot\cdot\cdot\geq W_{nn}$, if $\xi_{ij}$ is symmetric and
$$\sum_{i=2}^{n}\sigma_{2}^{ii}\xi_{ii}+\sigma_{2}^{11}\xi_{11}=\eta,$$ then
$$-\sum_{i\neq j}\xi_{ii}\xi_{jj}\geq \frac{n-1}{2\sigma_{2}(W)}
\frac{[2\sigma_{2}(W)\xi_{11}-W_{11}\eta]^2}{[(n-1)W_{11}^2+2(n-2)\sigma_{2}(W)]}
-\frac{\eta^2}{2\sigma_{2}(W)}.$$
\end{lemma}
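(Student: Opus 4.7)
The plan is to recast the lemma as a constrained quadratic minimization. Since $-\sum_{i\neq j}\xi_{ii}\xi_{jj}=\sum_{i}\xi_{ii}^{2}-(\sum_{i}\xi_{ii})^{2}$, the off-diagonal entries of $\xi$ play no role, and $\xi$ may be identified with its diagonal vector in $\mathbb{R}^{n}$. Treating $\xi_{11}$ as a fixed parameter, I would minimize this quadratic form over $(\xi_{22},\ldots,\xi_{nn})$ subject to the stated linear constraint and compare the minimum with the asserted bound.

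Two algebraic identities provide the structure. Because $\sigma_{2}$ is homogeneous of degree $2$, Euler's identity yields $\sum_{i}\sigma_{2}^{ii}W_{ii}=2\sigma_{2}(W)$; combined with the constraint via $W_{11}\cdot[\text{constraint}]-\xi_{11}\cdot[\text{Euler}]$, this produces
\[
W_{11}\eta-2\sigma_{2}(W)\xi_{11}=\sum_{i=2}^{n}\sigma_{2}^{ii}\bigl(W_{11}\xi_{ii}-\xi_{11}W_{ii}\bigr),
\]
which explains the appearance of $2\sigma_{2}\xi_{11}-W_{11}\eta$ on the right-hand side of the lemma. Second, using $\sigma_{2}^{ii}=\sigma_{1}(W)-W_{ii}$, a direct expansion yields
\[
\Bigl(\sum_{i\geq 2}\sigma_{2}^{ii}\Bigr)^{2}-(n-2)\sum_{i\geq 2}(\sigma_{2}^{ii})^{2}=(n-1)W_{11}^{2}+2(n-2)\sigma_{2}(W),
\]
which is exactly the denominator in the lemma and is strictly positive because $W\in\Gamma_{2}$ forces $\sigma_{2}(W)>0$. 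This positivity is crucial: it is what makes the reduced quadratic problem bounded below.

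With these ingredients, I would apply Lagrange multipliers to minimize $\sum_{i\geq 2}\xi_{ii}^{2}-(\sum_{i\geq 2}\xi_{ii})^{2}-2\xi_{11}\sum_{i\geq 2}\xi_{ii}$ subject to $\sum_{i\geq 2}\sigma_{2}^{ii}\xi_{ii}=\eta-\sigma_{2}^{11}\xi_{11}$. The stationarity condition forces each $\xi_{ii}$ ($i\geq 2$) to be affine in $\sigma_{2}^{ii}$, so a $2\times 2$ linear system determines $S=\sum_{i\geq 2}\xi_{ii}$ and the multiplier. Substituting back and simplifying via the two identities above, the minimum of $-\sum_{i\neq j}\xi_{ii}\xi_{jj}$ over admissible $\xi$ with $\xi_{11}$ prescribed should work out to
\[
\xi_{11}^{2}+\frac{(n-1)[2\sigma_{2}(W)\xi_{11}-W_{11}\eta]^{2}}{2\sigma_{2}(W)[(n-1)W_{11}^{2}+2(n-2)\sigma_{2}(W)]}-\frac{\eta^{2}}{2\sigma_{2}(W)},
\]
which exceeds the stated bound by the nonnegative quantity $\xi_{11}^{2}$. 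The main anticipated obstacle is purely computational: carefully tracking the algebra to identify the denominator as $P^{2}-(n-2)Q$ with $P=\sum_{i\geq 2}\sigma_{2}^{ii}$ and $Q=\sum_{i\geq 2}(\sigma_{2}^{ii})^{2}$, and to reduce the Lagrange minimum into the displayed closed form. The ordering $W_{11}\geq\cdots\geq W_{nn}$ plays no essential role beyond singling out the distinguished index $1$.
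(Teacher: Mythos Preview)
The paper does not supply a proof of this lemma; it is quoted from \cite{CQ}. So there is nothing in the present paper to compare against directly. Your strategy---treating the inequality as a constrained quadratic minimisation in $(\xi_{22},\dots,\xi_{nn})$ with $\xi_{11}$ held fixed, solving by Lagrange multipliers, and invoking the identity
\[
\Bigl(\sum_{i\geq 2}\sigma_{2}^{ii}\Bigr)^{2}-(n-2)\sum_{i\geq 2}(\sigma_{2}^{ii})^{2}=(n-1)W_{11}^{2}+2(n-2)\sigma_{2}(W)
\]
both to recognise the denominator and to certify that the restricted quadratic form is positive definite---is a correct and natural route to the result.

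There is, however, a slip in the outcome you anticipate. The constrained minimum does \emph{not} exceed the lemma's right-hand side by $\xi_{11}^{2}$; the inequality in the lemma is sharp. A quick test: for $n=3$, $W=\mathrm{diag}(2,1,1)$ (so $\sigma_{2}=5$), $\eta=0$ and $\xi_{11}=a$, the constraint forces $\xi_{22}+\xi_{33}=-\tfrac{2a}{3}$, the minimum of $-\sum_{i\neq j}\xi_{ii}\xi_{jj}$ is attained at $\xi_{22}=\xi_{33}=-\tfrac{a}{3}$ and equals $\tfrac{10a^{2}}{9}$, which coincides exactly with the lemma's bound $\dfrac{2(n-1)\sigma_{2}a^{2}}{(n-1)W_{11}^{2}+2(n-2)\sigma_{2}}=\dfrac{20a^{2}}{18}$. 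For $n=2$ the constraint pins down $\xi_{22}$ uniquely and one checks equality directly. So when you actually carry out the Lagrange algebra you should find that the putative extra $\xi_{11}^{2}$ cancels; the method is sound, only the predicted endpoint needs correcting.
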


For our case when $u \in C^4(\mathbb{R}^n)$, let $W=D^2 u$, $\sigma_2(D^2u)=1$, $\xi_{ij}=u_{ij1}$. Thus, $\eta=0$ and
 we obtain the following corollary directly.
\begin{corollary}\label{co1}
Let $u \in C^4(\mathbb{R}^n)$ be a 2-convex solution of \eqref{SQ}, then
$$-\sum_{i\neq j}u_{ii1}u_{jj1}\geq
\frac{2(n-1)u_{111}^2}{[(n-1)u_{11}^2+2(n-2)]}.$$
\end{corollary}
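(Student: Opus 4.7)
The plan is to apply Lemma \ref{le1} directly, taking $W = D^2 u$ and $\xi_{ij} = u_{ij1}$, at an arbitrary but fixed point. Fix $x_0 \in \mathbb{R}^n$ and choose an orthonormal frame at $x_0$ in which $W := D^2 u(x_0)$ is diagonal with eigenvalues ordered as $W_{11}\ge W_{22}\ge\cdots\ge W_{nn}$. Since $u$ is $2$-convex, $W \in \Gamma_2$, and the structural hypothesis of Lemma \ref{le1} on $W$ is met.

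Next, I would differentiate the equation \eqref{SQ} once in the $x_1$-direction. Since $\partial_1 \sigma_2(D^2 u) = \sigma_2^{ij}(D^2 u)\, u_{ij1}$, and at $x_0$ the matrix $\sigma_2^{ij}(W)$ is diagonal because $W$ is, evaluating at $x_0$ gives
$$\sigma_2^{11}(W)\, u_{111} + \sum_{i=2}^{n} \sigma_2^{ii}(W)\, u_{ii1} \;=\; 0.$$
Setting $\xi_{ij} = u_{ij1}$, which is symmetric because $u \in C^4(\mathbb{R}^n)$, one sees that the hypothesis of Lemma \ref{le1} is satisfied with $\eta = 0$.

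Finally, I would substitute $\sigma_2(W)=1$ (from \eqref{SQ}) and $\eta = 0$ into the conclusion of Lemma \ref{le1}. The right-hand side collapses to
$$\frac{n-1}{2}\cdot\frac{(2\,u_{111})^2}{(n-1)u_{11}^2 + 2(n-2)} \;=\; \frac{2(n-1)\, u_{111}^2}{(n-1)u_{11}^2 + 2(n-2)},$$
which is precisely the claimed inequality.

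I do not anticipate any real obstacle: the corollary is a one-line specialization of Lemma \ref{le1}. The only point requiring a small amount of care is interpretational, namely that the inequality should be read pointwise in the frame that diagonalizes and orders the eigenvalues of $D^2 u$ at the chosen point; this is exactly the frame set up above, and differentiation of \eqref{SQ} supplies precisely the linear constraint $\eta=0$ demanded by the lemma.
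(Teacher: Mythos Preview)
Your proposal is correct and follows exactly the paper's approach: the paper states that the corollary follows directly from Lemma \ref{le1} by taking $W=D^2u$, $\xi_{ij}=u_{ij1}$, $\sigma_2(D^2u)=1$, and $\eta=0$. Your write-up simply supplies the details of this specialization, including the frame choice and the differentiation of \eqref{SQ} that yields $\eta=0$.
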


Next, we recall the following Lemma 3 in \cite{Guan1}. For completeness, we give the proof here.
\begin{lemma}\label{le2}
Under the same assumption as in Lemma \ref{le1}, and in addition that there exists
a positive constant
\begin{eqnarray}\label{le2-0}
a\leq \sqrt{\frac{\sigma_{2}(W)}{3(n-1)(n-2)}}
\end{eqnarray}
(if $n=2$, $a>0$ could be arbitrary), such that
\begin{eqnarray}\label{le2-1}
\sigma_{3}(W+aI)\geq 0,
\end{eqnarray}
then
\begin{eqnarray}\label{le2-2}
\frac{7}{6}\sigma_{2}(W)\geq (\sigma_{2}(W)+\frac{(n-1)(n-2)}{2}a^2)\geq \frac{5}{6}\sigma_{2}^{11}(W)W_{11},
\end{eqnarray}
provided that $W_{11}>6(n-2)a$. Furthermore, for any $j \in \{2,...,n\}$,
\begin{eqnarray}\label{le2-3}
|W_{jj}|\leq (n-1)^2a+\frac{7(n-1)\sigma_{2}(W)}{5W_{11}}.
\end{eqnarray}
\end{lemma}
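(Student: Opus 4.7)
The estimate \eqref{le2-2} consists of two inequalities. The left one, $\frac{7}{6}\sigma_2(W) \geq \sigma_2(W) + \frac{(n-1)(n-2)}{2}a^2$, is immediate from the size constraint \eqref{le2-0}, which rearranges to $\frac{(n-1)(n-2)}{2}a^2 \leq \frac{1}{6}\sigma_2(W)$. For the right-hand inequality, my plan is to exploit the hypothesis \eqref{le2-1} by expanding
\[
\sigma_3(W+aI) = \sigma_3(W) + (n-2)a\,\sigma_2(W) + \tbinom{n-1}{2}a^2\sigma_1(W) + \tbinom{n}{3}a^3 \geq 0,
\]
which yields a lower bound on $\sigma_3(W)$ in terms of $a$, $\sigma_1(W)$, and $\sigma_2(W)$. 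I then separate the dominant eigenvalue using $\sigma_2(W) = W_{11}\sigma_2^{11}(W) + \sigma_2(W|1)$ and $\sigma_3(W) = W_{11}\sigma_2(W|1) + \sigma_3(W|1)$, where $W|1 = \mathrm{diag}(W_{22},\ldots,W_{nn})$. A short rearrangement shows that the right inequality in \eqref{le2-2} is equivalent to
\[
\tfrac{1}{6}W_{11}\sigma_1(W|1) + \sigma_2(W|1) + \tfrac{(n-1)(n-2)}{2}a^2 \geq 0.
\]

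The substance of the lemma is therefore a lower bound on $\sigma_2(W|1)$. I plan to extract this from the identity $W_{11}\sigma_2(W|1) = \sigma_3(W) - \sigma_3(W|1)$: inserting the $\sigma_3(W)$ bound above and estimating $|\sigma_3(W|1)|$ via a Newton--Maclaurin inequality on the $(n-1)$-dimensional submatrix $W|1$ should produce the inequality of the correct shape. The main obstacle will be packaging the algebra cleanly; the hypothesis $a \leq \sqrt{\sigma_2(W)/(3(n-1)(n-2))}$ is tuned precisely so that $a^2$-type error terms fit inside the $\frac{1}{6}\sigma_2(W)$ slack, while $W_{11} > 6(n-2)a$ ensures that linear-in-$a$ cross terms are dominated by the quadratic $W_{11}^2$ contributions and do not ruin the final estimate.

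The pointwise bound \eqref{le2-3} is then a corollary of \eqref{le2-2}: rearrangement gives $\sigma_2^{11}(W) = \sum_{j=2}^n W_{jj} \leq 7\sigma_2(W)/(5W_{11})$, so since $W_{22} \geq \cdots \geq W_{nn}$, it suffices to bound $W_{22}$ from above and $W_{nn}$ from below. For the lower bound on $W_{nn}$, I plan to apply \eqref{le2-1} with the splitting $\sigma_3(W+aI) = (W_{nn}+a)\sigma_2((W+aI)|n) + \sigma_3((W+aI)|n)$ together with the positivity $\sigma_2^{nn}(W) > 0$ coming from $W \in \Gamma_2$, concluding that $W_{nn} \geq -(n-1)a - O(\sigma_2(W)/W_{11})$. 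The matching upper bound on $W_{22}$ then follows from $W_{22} = \sigma_2^{11}(W) - \sum_{j \geq 3} W_{jj} \leq \sigma_2^{11}(W) + (n-2)|W_{nn}|$, and assembling constants produces \eqref{le2-3}.
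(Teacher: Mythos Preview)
Your reduction of the right inequality in \eqref{le2-2} to
\[
\tfrac{1}{6}W_{11}\sigma_1(W|1) + \sigma_2(W|1) + \tfrac{(n-1)(n-2)}{2}a^2 \geq 0
\]
is correct, but from this point your route diverges from the paper's and becomes substantially more involved. The paper does not touch $\sigma_3(W)$ or $\sigma_3(W|1)$ at all. Instead it uses the standard cone-restriction property: since $W+aI \in \overline{\Gamma}_3$, deleting any eigenvalue (in particular the largest) gives $(W_{22}+a,\ldots,W_{nn}+a) \in \overline{\Gamma}_2$, hence
\[
\sigma_2\bigl((W+aI)|1\bigr) = \sigma_2(W|1) + (n-2)a\,\sigma_1(W|1) + \tfrac{(n-1)(n-2)}{2}a^2 \geq 0.
\]
Since $\sigma_1(W|1) = \sigma_2^{11}(W) > 0$ and $W_{11} > 6(n-2)a$ gives $(n-2)a < \tfrac{1}{6}W_{11}$, the displayed line immediately implies your target inequality. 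One expansion, no auxiliary estimates.

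Your proposed detour through $W_{11}\sigma_2(W|1) = \sigma_3(W) - \sigma_3(W|1)$ plus Newton--Maclaurin on $\sigma_3(W|1)$ is not just longer; it has a real obstacle you have not addressed. Newton's inequality (valid for arbitrary reals) gives only the one-sided bound $\sigma_3(W|1) \leq c\,\sigma_2(W|1)^2/\sigma_1(W|1)$, so after substitution you obtain a quadratic inequality in $s = \sigma_2(W|1)$ of the form $\tfrac{c}{t}s^2 + W_{11}s + B \geq 0$. This has two branches, and the bad branch $s \leq s_{-} \approx -W_{11}\sigma_1(W|1)/c$ is not excluded by the information you have invoked. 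Ruling it out requires exactly the kind of input (e.g.\ $(W+aI)|1 \in \overline{\Gamma}_2$) that makes the whole detour unnecessary.

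For \eqref{le2-3} the paper again uses cone restriction: deleting the two largest eigenvalues of $W+aI$ gives $\sum_{j\geq 3}(W_{jj}+a) \geq 0$, whence $W_{22} \leq (n-2)a + \sigma_2^{11}(W)$; the bound $|W_{jj}| \leq (n-2)W_{22}$ then comes from $0 < \sigma_2^{11}(W) \leq (n-2)W_{22} + W_{nn}$. Your alternative splitting $\sigma_3(W+aI) = (W_{nn}+a)\sigma_2((W+aI)|n) + \sigma_3((W+aI)|n)$ is workable in principle, but you would still need to control $\sigma_3((W+aI)|n)$, and doing so cleanly again amounts to invoking cone properties of the subvectors. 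In short, the paper's argument is a one-line application of $\overline{\Gamma}_k|_i \subset \overline{\Gamma}_{k-1}$ at each stage; your plan rediscovers parts of this indirectly while leaving the key quadratic-branch issue unresolved.
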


\begin{proof}
Since $W+aI \in \overline{\Gamma}_3$, it follows that
$$\widetilde{W}=(W_{22}+a, ..., W_{nn}+a)\in \overline{\Gamma}_2.$$
Thus,
$$\sigma_{2}(\widetilde{W})\geq 0,$$
which means
$$\sigma_{2}(W)-\sigma_{2}^{11}(W)W_{11}+(n-2)a\sigma_{2}^{11}(W)+\frac{(n-1)(n-2)}{2}a^2\geq 0.$$
Noting that $$\frac{(n-1)(n-2)}{2}a^2\leq \frac{\sigma_{2}(W)}{6}$$
and $$W_{11}-(n-2)a\geq \frac{5}{6}W_{11},$$ we have
\begin{eqnarray*}
\frac{7}{6}\sigma_{2}(W)&\geq&\frac{(n-1)(n-2)}{2}a^2+\sigma_{2}(W)
\\&\geq&(W_{11}-(n-2)a)\sigma_{2}^{11}(W)\\&\geq& \frac{5}{6}W_{11}\sigma_{2}^{11}(W).
\end{eqnarray*}
Since $W$ is diagonal and $W+aI \in \overline{\Gamma}_3$, thus
$$\sum_{j=3}^{n}(W_{jj}+a)\geq 0.$$
So we have
$$\sigma_{2}^{11}(W+aI)=W_{22}+a+\sum_{j=3}^{n}(W_{jj}+a)\geq W_{22}+a.$$
and
$$W_{22}\leq (n-2)a+\sigma_{2}^{11}(W).$$
On the other hand, as $W \in \Gamma_2$ and $W_{11}\geq...\geq W_{nn}$,
$$0\leq\sigma_{2}^{11}(W)\leq (n-2)W_{22}+W_{nn}$$
and
$$-W_{jj}\leq-W_{nn}\leq (n-2)W_{22}$$
for $2\leq j\leq n$. Thus, we obtain
$$|W_{jj}|\leq(n-2)W_{22}.$$
Therefore,
$$|W_{jj}|\leq(n-2)W_{22}\leq (n-1)^2a+(n-1)\sigma_{2}^{11}(W)\leq (n-1)^2a
+\frac{7(n-1)}{5}\frac{\sigma_{2}(W)}{W_{11}}.$$
\end{proof}

Using the above lemma for the solution of the equation \eqref{SQ}, we can have
\begin{corollary}
Let $u$ be a 2-convex solution of \eqref{SQ}. Assume $D^2u$ is diagonal and
$u_{11}\geq\cdot\cdot\cdot\geq u_{nn}$, there exists a constant $A$ sufficiently large  such that
$$\sigma_3(D^2u)\geq -A,$$
and
\begin{eqnarray}\label{u11}
u_{11}\geq \frac{6(n-2)}{n}A,
\end{eqnarray}
then
\begin{eqnarray}\label{co2-1}
\sigma_2(D^2u)\geq \frac{5}{7}\sigma_{2}^{11}(D^2 u)u_{11},
\end{eqnarray}
and
\begin{eqnarray}\label{co2-2}
|u_{jj}|\leq (n-1)^2 \sqrt{\frac{2A}{n(n-1)\sigma_1}}+\frac{7(n-1)}{5u_{11}}
\leq (n-1)^2 \sqrt{\frac{2A}{n(n-1)u_{11}}}+\frac{7(n-1)}{5u_{11}}.
\end{eqnarray}
\end{corollary}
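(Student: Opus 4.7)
The plan is to apply Lemma \ref{le2} to $W = D^2 u$, leveraging $\sigma_2(W)=1$ and $\sigma_3(W) \geq -A$. Guided by the form of \eqref{co2-2} and Lemma \ref{le2}'s output $|W_{jj}|\leq (n-1)^2 a + \frac{7(n-1)\sigma_2(W)}{5W_{11}}$, the natural choice of the auxiliary constant is $a = \sqrt{\frac{2A}{n(n-1)\sigma_1}}$. A useful preliminary observation is that for any diagonal $W \in \Gamma_2$ with largest diagonal entry $u_{11}$, one has $\sigma_1(W) \geq u_{11}$: since $\sigma_2^{ij}$ is positive definite on $\Gamma_2$ (as noted earlier), $\sigma_2^{11}(W) = \sum_{j \geq 2} u_{jj} > 0$, so $\sigma_1(W) = u_{11} + \sigma_2^{11}(W) > u_{11}$. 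I will use this fact both in checking the hypotheses of Lemma \ref{le2} and in passing from the first to the second inequality in \eqref{co2-2}.

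Next I would verify the three hypotheses of Lemma \ref{le2} with this $a$. The size bound $a \leq \sqrt{\frac{\sigma_2(W)}{3(n-1)(n-2)}}$ reduces after squaring to $\sigma_1 \geq \frac{6(n-2)A}{n}$, which is immediate from \eqref{u11} combined with $\sigma_1 \geq u_{11}$. The pointwise condition $W_{11} > 6(n-2)a$ likewise follows from \eqref{u11} (after substituting $a$ and invoking the "sufficiently large $A$" regime). The cone condition $\sigma_3(W+aI) \geq 0$ will be verified through the polynomial expansion
\begin{equation*}
\sigma_3(W+aI) = \sigma_3(W) + (n-2) a\, \sigma_2(W) + \binom{n-1}{2} a^2 \sigma_1(W) + \binom{n}{3} a^3,
\end{equation*}
substituting $\sigma_3(W) \geq -A$, $\sigma_2(W)=1$, and the chosen $a$ (for which the quadratic-in-$a$ contribution equals $\frac{(n-2)A}{n}$), and checking that the remaining linear and cubic terms absorb the residual deficit.

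Once the hypotheses of Lemma \ref{le2} are in place, the conclusions are extracted directly: \eqref{co2-1} is precisely \eqref{le2-2} after rearrangement and $\sigma_2(W)=1$, while the first inequality of \eqref{co2-2} is \eqref{le2-3} with our value of $a$ inserted. The second inequality of \eqref{co2-2} then follows from $\sigma_1 \geq u_{11}$, which implies $\sqrt{\frac{2A}{n(n-1)\sigma_1}} \leq \sqrt{\frac{2A}{n(n-1)u_{11}}}$ so the bound only weakens. The main obstacle is the cone condition $\sigma_3(W+aI) \geq 0$, since the stated choice of $a$ leaves very little slack: the verification demands a careful matching of all four terms in the expansion, making essential use of \eqref{u11} and the hypothesis that $A$ is sufficiently large.
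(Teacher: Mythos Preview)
Your approach is exactly the paper's: the same choice $a=\sqrt{\tfrac{2A}{n(n-1)\sigma_1}}$, the same use of $\sigma_1\ge u_{11}$ (via $\sigma_2^{11}>0$), and the same reduction to the three hypotheses of Lemma~\ref{le2}. Your instinct that the cone condition is the delicate point is well founded, and in fact your expansion exposes a slip in the paper. The paper writes
\[
\sigma_3(W+aI)=\sigma_3(W)+na\,\sigma_2(W)+\tfrac{n(n-1)}{2}a^2\sigma_1(W)+\tfrac{n(n-1)(n-2)}{6}a^3,
\]
whose quadratic term then equals $A$ and exactly cancels the lower bound $\sigma_3\ge -A$. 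But the correct coefficients are the ones you used, $(n-2)$ and $\binom{n-1}{2}$, so the quadratic contribution is only $\tfrac{(n-2)A}{n}$, leaving a residual deficit of $\tfrac{2A}{n}$. Your plan to let the linear and cubic terms absorb this deficit cannot succeed: combining \eqref{u11} with $\sigma_1\ge u_{11}$ gives $a\le \bigl(3(n-1)(n-2)\bigr)^{-1/2}$, so both $(n-2)a$ and $\binom{n}{3}a^3$ are bounded by constants depending only on $n$, whereas $\tfrac{2A}{n}\to\infty$ as $A\to\infty$.

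The fix is painless and does not affect anything downstream: take instead
\[
a=\sqrt{\frac{2A}{(n-1)(n-2)\,\sigma_1}}\,,
\]
i.e.\ enlarge the original choice by the factor $\sqrt{n/(n-2)}$. Then $\binom{n-1}{2}a^2\sigma_1=A$, so $\sigma_3(W+aI)\ge \sigma_3(W)+A\ge 0$ immediately, while the size bound \eqref{le2-0} and the condition $u_{11}>6(n-2)a$ still follow from \eqref{u11} after a harmless adjustment of the numerical constant (or simply from ``$u_{11}$ large enough''). The conclusions \eqref{co2-1}--\eqref{co2-2} then hold with $\sqrt{\tfrac{2A}{(n-1)(n-2)\sigma_1}}$ in place of $\sqrt{\tfrac{2A}{n(n-1)\sigma_1}}$, which is all that the proof of Lemma~\ref{le3} actually uses.
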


\begin{proof}
We may pick $a=\sqrt{\frac{2A}{n(n-1)\sigma_{1}(D^2 u)}}$. Since $u$ is 2-convex,
$u_{11}\leq\sigma_1$. Then clearly,
\begin{eqnarray*}
a=\sqrt{\frac{2A}{n(n-1)\sigma_{1}}}\leq \sqrt{\frac{2A}{n(n-1)u_{11}}}
\leq\sqrt{\frac{1}{3(n-1)(n-2)}}\leq\sqrt{\frac{\sigma_2}{3(n-1)(n-2)}}
\end{eqnarray*}
in view of \eqref{u11}. Meanwhile,
\begin{eqnarray*}
\sigma_{3}(W+aI)&=&\sigma_{3}(W)+na\sigma_{2}(W)
+\frac{n(n-1)}{2}a^2\sigma_{1}(W)+\frac{n(n-1)(n-2)}{6}a^3
\\&\geq& \sigma_{3}(W)+\frac{n(n-1)}{2}a^2\sigma_{1}(W)\\&\geq&0,
\end{eqnarray*}
which guarantees the condition \eqref{le2-1} is satisfied. Lastly, if we choose $A$ sufficiently large, we have from \eqref{u11}
$$u_{11}^{\frac{3}{2}}\geq 6(n-2)\sqrt{\frac{2A}{n(n-1)}},$$
which implies
$$u_{11}\geq 6(n-2)\sqrt{\frac{2A}{n(n-1)u_{11}}}\geq 6(n-2)\sqrt{\frac{2A}{n(n-1)\sigma_{1}}}=6(n-2)a.$$
Then, this corollary
follows from Lemma \ref{le2} directly.
\end{proof}

To prove Theorem \ref{main}, we need the following key Lemma.
\begin{lemma}\label{le3}
Let $\Omega$ be a bounded open set in $\mathbb{R}^n$,
we consider the Drichlet problem of the 2-Hessian equation
\begin{equation}\label{SQ-}
\left\{
\begin{aligned}
&\sigma_2(D^2u(x))=1, \quad x \in \Omega; \\
&u=0, \quad x \in \partial\Omega.
\end{aligned}
\right.
\end{equation}
Assume $$\sigma_{3}(D^2 u)\geq -A$$ for some positive constant $A$.
Then, for any $2$-convex solution $u \in C^4(\Omega)\cap C(\overline{\Omega})$, we have the Pogorelov type estimate,
\begin{eqnarray}\label{est}
\max_{x \in \Omega}(-u(x))^{\alpha}|D^2 u(x)|\leq C
\end{eqnarray}
for sufficiently large $\alpha>0$. Here $\alpha$ and
 $C$ only depend on $A$, $n$, the diameter of the domain $\Omega$.
\end{lemma}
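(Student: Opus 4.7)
The plan is to carry out a Pogorelov-type interior estimate for the $\sigma_{2}$ operator, with the usual third-order terms controlled by Corollary~\ref{co1} and the eigenvalue/coefficient bounds from the corollary to Lemma~\ref{le2}. Following Pogorelov, I would study the auxiliary function
$$\Phi(x,\xi) \,=\, \alpha \log\bigl(-u(x)\bigr) + \log u_{\xi\xi}(x) + \tfrac{1}{2}\beta \, |Du(x)|^2$$
on $\Omega\times\mathbb{S}^{n-1}$, with $\alpha,\beta>0$ to be chosen large at the end. Because every $2$-convex function is subharmonic, the maximum principle gives $u<0$ in $\Omega$, so $\alpha\log(-u)\to-\infty$ on $\partial\Omega$ and $\Phi$ attains its supremum at some interior point $x_0$ and some direction $\xi_0$. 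After a rotation I assume $D^2 u(x_0)$ is diagonal with $u_{11}\geq \cdots\geq u_{nn}$ and $\xi_0=e_1$. I may also assume $u_{11}(x_0)$ is so large that $u_{11}\geq 6(n-2)A/n$, since otherwise $\Phi(x_0)$, and hence $\Phi$ itself, is already bounded.

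At $x_0$ the vanishing of $\nabla\Phi$ gives the first-order identity $u_{11i}/u_{11}=\alpha u_i/u-\beta u_i u_{ii}$, while $\sigma_2^{ij}\Phi_{ij}(x_0)\leq 0$. Differentiating $\sigma_2(D^2u)=1$ once and twice along $e_1$ yields $\sigma_2^{ij}u_{ij1}=0$ and $\sigma_2^{ij}u_{ij11}=-\sigma_2^{ij,kl}u_{ij1}u_{kl1}$, and in the diagonal frame the second quantity equals $-\sum_{i\neq j}u_{ii1}u_{jj1}$. Substituting these identities into $\sigma_2^{ij}\Phi_{ij}\leq 0$ produces an inequality with three positive terms ($\beta\sigma_2^{ii}u_{ii}^2$, $\beta\sum_i\sigma_2^{ii}$, $\alpha\sigma_2^{ii}u_i^2/u^2$) and three potentially negative ones (a trace term $-\alpha\sum_i\sigma_2^{ii}/u$, the third-order contribution $-\sum_{i\neq j}u_{ii1}u_{jj1}/u_{11}$, and a ``bad'' gradient term $-\sigma_2^{ii}u_{11i}^2/u_{11}^2$).

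The heart of the argument is to dispose of the bad gradient term. Corollary~\ref{co1} rewrites the third-order contribution as a positive quantity of order $u_{111}^2/[u_{11}(u_{11}^2+1)]$, which together with the estimate \eqref{co2-1} on $\sigma_2^{11}u_{11}$ absorbs the $i=1$ component $\sigma_2^{11}u_{111}^2/u_{11}^2$. For $i\geq 2$ I use the first-order relation to express $u_{11i}$ in terms of $u_i$, $u$, and $u_{ii}$, and then invoke \eqref{co2-2} to bound $|u_{ii}|$ by $C(A,n)/\sqrt{u_{11}}$ plus a harmless piece; this converts each $\sigma_2^{ii}u_{11i}^2/u_{11}^2$ into something controlled by $\alpha^2\sigma_2^{ii}u_i^2/u^2$ and $\beta^2\sigma_2^{ii}u_i^2$, which the good terms can swallow once $\alpha$ is chosen large and $\beta$ is tuned accordingly. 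The trace term $-\alpha\sum_i\sigma_2^{ii}/u$ is similarly absorbed using $|u|\leq C(\operatorname{diam}\Omega)$ and the positivity of $\beta\sum_i\sigma_2^{ii}$. The resulting bound $u_{11}(x_0)(-u(x_0))^{\alpha}\leq C$ at the maximum of $\Phi$, combined with \eqref{co2-2} to control the remaining eigenvalues, propagates to all of $\Omega$ and yields \eqref{est}.

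The main obstacle is precisely this balancing: the $i\geq 2$ contributions to the bad gradient term and the $i=1$ contribution (for which the $\sigma_2$-concavity inequality of Corollary~\ref{co1} is used almost sharply) must be dominated simultaneously, and this is possible only because $2$-convexity together with $\sigma_3(D^2u)\geq -A$ forces the subordinate eigenvalues to be small compared to $u_{11}$, as quantified in Lemma~\ref{le2}. Without one of these two ingredients the classical Pogorelov argument for $\sigma_2$ is known to fail, which is exactly why the hypothesis $\sigma_3\geq -A$ enters the statement.
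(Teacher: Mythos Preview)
Your outline has the right architecture (Pogorelov barrier, diagonalize at the max, use Corollary~\ref{co1} for the $i=1$ third-order term, use \eqref{co2-1}--\eqref{co2-2} for the eigenvalue structure), but the choice of $\tfrac{\beta}{2}|Du|^{2}$ in the auxiliary function is a genuine gap. For a merely $2$-convex solution in $C^{4}(\Omega)\cap C(\overline{\Omega})$ there is no a priori bound on $|Du|$, so you cannot control $\Phi(x_0)$ after the maximum-principle step, and the terms $\beta^{2}\sigma_{2}^{ii}u_i^{2}u_{ii}^{2}$ arising from the first-order relation have nothing to absorb them. The paper avoids this by using $\tfrac{1}{2}|x|^{2}$ instead: its second derivatives contribute $(n-1)\sigma_{1}\ge u_{11}$, and the residual $\tfrac{2}{\alpha}\sigma_{2}^{ii}x_i^{2}$ is bounded by the diameter alone.

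There are also sign/bookkeeping errors that derail your absorption scheme. After contracting with $\sigma_2^{ij}$ the contribution of $\alpha\log(-u)$ is $2\alpha/u-\alpha\sigma_2^{ii}u_i^{2}/u^{2}$, and both pieces are \emph{negative} (since $u<0$); so ``$\alpha\sigma_2^{ii}u_i^{2}/u^{2}$'' is not a good term available to swallow $\alpha^{2}\sigma_2^{ii}u_i^{2}/u^{2}$. Likewise $\tfrac{\beta}{2}|Du|^{2}$ contributes only $\beta\sigma_2^{ii}u_{ii}^{2}$, not an extra $\beta\sum_i\sigma_2^{ii}$. Finally, your plan for the $i\ge 2$ bad terms (rewrite $u_{11i}$ via the first-order relation and bound $|u_{ii}|$) is not how the paper proceeds: the paper keeps $u_{11i}$ and uses the \emph{off-diagonal} third-order terms $2\sum_{i\neq 1}u_{11i}^{2}/u_{11}$ coming from $\sum_{i\neq j}u_{ij1}^{2}$, so that for $i\ge 2$ the combined coefficient is $(2u_{11}-(1+\tfrac{2}{\alpha})\sigma_{2}^{ii})/u_{11}^{2}$, which is nonnegative once $u_{11}$ is large by \eqref{co2-2}. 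With the $|x|^{2}$ test function and this treatment of $i\ge2$, the inequality closes to $0\ge 2\alpha/u+\tfrac{n-1}{2}u_{11}$, giving $(-u)u_{11}\le C$ with $C$ depending only on $n,A$ and $\mathrm{diam}\,\Omega$.
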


\begin{proof}
First, we translate the coordinate system such that $\Omega$ contains the coordinate origin.
Using the Comparison principle (see Theorem 17.1 in Page 443 of \cite{GT}), there exists the function
$$w=\frac{1}{\sqrt{2n(n-1)}}|x|^2-a$$
such that
$$w\leq u\leq 0,$$
where $a$ depends on the diameter of the domain. Since $u=0$ on $\partial \Omega$, we have $u\leq 0$ in $\Omega$.
Thus, $|u|$ can be controlled by the diameter of the domain $\Omega$.

Since $\sigma_1 (D^2u)>0$, we obtain $$|D^2 u(x)|\leq (n-1)\max_{\xi\in\mathbb{S}^{n-1}}u_{\xi\xi}(x).$$
So we need only to estimate
\begin{eqnarray*}
\max_{(x, \xi)\in \Omega\times \mathbb{S}^{n-1}}(-u(x))^{\alpha}u_{\xi\xi}\leq C.
\end{eqnarray*}
Now we consider the function for $x \in \Omega$, $\xi \in \mathbb{S}^{n-1}$
\begin{eqnarray*}
\widetilde{P}(x, \xi)=\alpha\log (-u)+\log \max \{u_{\xi\xi}(x), 1\}+\frac{1}{2}|x|^2,
\end{eqnarray*}
where $\alpha$ is a constant to be determined later.
Since $u=0$ on $\partial \Omega$, the maximum of $\widetilde{P}$ is attained
in some interior point $x_0$ of $\Omega$ and some $\xi(x_0) \in \mathbb{S}^{n-1}$.
Choose smooth orthonormal local frames $e_1, \ldots, e_n$
about $x_0$ such that $\xi(x_0)=e_1$ and $\{u_{ij} (x_0)\}$ is diagonal. Set
$$u_{11}(x_0)\geq u_{22}(x_0)\geq...\geq u_{nn}(x_0).$$
We may also assume that $u_{11}(x_0)\geq 1$ is sufficiently large.
Then we consider the function
\begin{eqnarray*}
P(x)=\alpha\log (-u)+\log u_{11}+\frac{1}{2}|x|^2.
\end{eqnarray*}
Note that $x_0$ is also a maximum point of $P$. We want to estimate $P(x_0)$.

At the maximum point $x_0$,
\begin{eqnarray}\label{1-diff}
0=P_i=\frac{\alpha u_i}{u}+\frac{u_{11i}}{u_{11}}+ x_i.
\end{eqnarray}
Noticing
\begin{eqnarray*}
P_{ij}=\frac{\alpha u_{ij}}{u}-\frac{\alpha u_i u_j}{u^2}
+\frac{u_{11ij}}{u_{11}}-\frac{u_{11i}u_{11j}}{u_{11}^2}+\delta_{ij},
\end{eqnarray*}
we can get at $x_0$
\begin{eqnarray*}
0\geq \sigma_{2}^{ij}P_{ij}&=&\frac{2\alpha}{u}
-\frac{\alpha\sigma_{2}^{ii}u_{i}^2}{u^2}
+\frac{\sigma_{2}^{ij}u_{11ij}}{u_{11}}-\frac{\sigma_{2}^{ii}u_{11i}^2}{u_{11}^2}
\\&&+(n-1)\sigma_1,
\end{eqnarray*}
in view of $\sigma_2=1$. Differential equation \eqref{SQ} in $k$-th variable,
$$\sigma_{2}^{ij}u_{ijk}=0.$$
Taking twice derivative of the equation \eqref{SQ},
\begin{eqnarray*}
\sigma_{2}^{ij, kl}u_{kl1}u_{ij1}+\sigma_{2}^{ij}u_{ij11}=0,
\end{eqnarray*}
which means
\begin{eqnarray}\label{le3-1}
\sigma_{2}^{ij}u_{ij11}=\sum_{i\neq j}u^{2}_{ij1}-\sum_{i\neq j}u_{ii1}u_{jj1}.
\end{eqnarray}
Now we want to estimate the second term on the right side of the above equality.
Assume that $u_{11}\geq \sqrt{\frac{6(n-2)}{n-1}}$ at $x_0$,
otherwise our Lemma holds true. Then, using Corollary \ref{co1}, we obtain
\begin{eqnarray*}
-\sum_{i\neq j}u_{ii1}u_{jj1}\geq \frac{3}{2}\frac{u_{111}^2}{u_{11}^2},
\end{eqnarray*}
which together with the inequality \eqref{co2-1}
\begin{eqnarray*}
1=\sigma_2\geq \frac{5}{7}\sigma^{11}_{2}u_{11},
\end{eqnarray*}
implies
\begin{eqnarray}\label{102901}
-\sum_{i\neq j}u_{ii1}u_{jj1}\geq \frac{\frac{15}{14}\sigma^{11}_{2}u_{111}^2}{u_{11}}.
\end{eqnarray}
Inserting \eqref{102901} into \eqref{le3-1} we have
\begin{eqnarray*}
\frac{\sigma_{2}^{ij}u_{11ij}}{u_{11}}
-\frac{\sigma_{2}^{ii}u^{2}_{11i}}{u^{2}_{11}}&\geq&\frac{2\sum_{i\neq 1}u^{2}_{11i}}{u_{11}}
-\frac{\sigma_{2}^{ii}u^{2}_{11i}}{u^{2}_{11}}+\frac{\frac{15}{14}\sigma^{11}_{2}u_{111}^2}{u_{11}^2}\\&=&
\frac{\sum_{i\neq 1}(2u_{11}-\sigma_{2}^{ii})u^{2}_{11i}}{u_{11}^2}
+\frac{\frac{1}{14}\sigma^{11}_{2}u_{111}^2}{u_{11}^2}.
\end{eqnarray*}
Then,
\begin{eqnarray*}
\sigma_{2}^{ij}P_{ij}&\geq&\frac{2\alpha }{u}
-\frac{\alpha\sigma_{2}^{ii}u_{i}^2}{u^2}+\frac{\sum_{i\neq 1}(2u_{11}-\sigma_{2}^{ii})u^{2}_{11i}}{u_{11}^2}+
\frac{\frac{1}{14}\sigma^{11}_{2}u_{111}^2}{u_{11}^2}
\\&&+(n-1)\sigma_1.
\end{eqnarray*}
In view of \eqref{1-diff} and the Cauchy-Schwarz inequality, we have
\begin{eqnarray*}
-(\frac{u_i}{u})^2\geq -\frac{2}{\alpha^2}\frac{u_{11i}^2}{u_{11}^2}
-\frac{2}{\alpha^2}( x_i)^2.
\end{eqnarray*}
Thus,
\begin{eqnarray*}
\sigma_{2}^{ij}P_{ij}&\geq&\frac{2\alpha }{u}+(\frac{1}{14}-\frac{2}{\alpha})
\frac{\sigma^{11}_{2}u_{111}^2}{u_{11}^2}
+\frac{\sum_{i\neq 1}(2u_{11}
-(1+\frac{2}{\alpha})\sigma_{2}^{ii})u^{2}_{11i}}{u_{11}^2}
\\&&-\frac{2}{\alpha} \sigma_{2}^{ii}x_{i}^2
+(n-1)\sigma_1.
\end{eqnarray*}
In view of \eqref{co2-2}, if we choose $u_{11}$ bigger than some constant
$C(n, \alpha, A)$ (otherwise our lemma holds true automatically), we have
\begin{eqnarray*}
2u_{11}-(1+\frac{2}{\alpha})\sigma_{2}^{ii}&\geq& (1-\frac{2}{\alpha})u_{11}
-(1+\frac{2}{\alpha})(n-2)\bigg((n-1)^2 \sqrt{\frac{2A}{n(n-1)u_{11}}}+\frac{7(n-1)}{5u_{11}}\bigg)
\\&\geq& (1-\frac{2}{\alpha})u_{11}
-\frac{C(n, \alpha, A)}{\sqrt{u_{11}}}-C(n, \alpha, A)>0.
\end{eqnarray*}
Next, if we choose $\alpha$ large, we obtain at $x_0$
\begin{eqnarray*}
0\geq\sigma_{2}^{ij}P_{ij}&\geq&\frac{2\alpha }{u}
-\frac{2}{\alpha} \sigma_{2}^{ii}x_{i}^2
+(n-1)\sigma_1\\&\geq& \frac{2\alpha }{u}
+\frac{(n-1)}{2}\sigma_1\\&\geq& \frac{2\alpha }{u}
+\frac{(n-1)}{2}u_{11}.
\end{eqnarray*}
So, we obtain our Lemma.
\end{proof}

We now begin to prove Theorem \ref{main}.
\begin{proof}
The proof is standard \cite{LRW} \cite{Tr}.
Let $u$ be an entire solution of the equation \eqref{SQ}. For any constant $R>1$, we consider the set
$$\Omega_{R}=\{y \in \mathbb{R}^n: u(Ry)<R^2\}.$$
Let
$$u_{R}(y)=\frac{u(Ry)-R^2}{R^2}.$$
We consider the following Dirichlet problem:
\begin{equation}\label{SQ-1}
\left\{
\begin{aligned}
&\sigma_2(D^2v(x))=1, \quad x \in \Omega_R; \\
&v=0, \quad x \in \partial\Omega_R.
\end{aligned}
\right.
\end{equation}
Since $$D^2_{y} u_R=D^2_{x} u,$$
clearly, $u_R$ is a 2-convex solution of \eqref{SQ-1}and satisfies $\sigma_{3}(D^2u_{R})\geq -A$.
Applying Lemma \ref{le3}, so we have the estimates:
\begin{eqnarray}\label{est-}
(-u_R)^{\alpha}|D^2 u_R|\leq C.
\end{eqnarray}
Now using the quadratic growth condition in Theorem \ref{main}, we have
\begin{eqnarray*}
b|R y|^2-c\leq u(R y)\leq R^2,
\end{eqnarray*}
which implies
\begin{eqnarray*}
|y|^2\leq \frac{1+c}{b}.
\end{eqnarray*}
Thus, $\Omega_R$ is bounded. Hence the constant $\alpha$ and $C$ becomes a absolutely constant. We now
consider the domain
$$\Omega^{\prime}_{R}=\{y \in \mathbb{R}^n: u(Ry)<\frac{R^2}{2}\}\subset \Omega_R.$$
In $\Omega^{\prime}_{R}$, we have
$$u_R(y)\leq-\frac{1}{2}.$$
Hence, \eqref{est-} implies that
$$|D^2 u_R|\leq 2^{\alpha} C.$$
Note that,
$$D^2_{y} u_R=D^2_{x} u.$$
Thus, using the previous two formulas, we have
$$|D^2 u|\leq C, \quad \mbox{in} \quad \widetilde{\Omega}_{R}=\{x: u(x)<\frac{R^2}{2}\},$$
where $C$ is a absolutely constant. The arbitrary of $R$ implies the
above inequality holds true in all over $\mathbb{R}^n$. Using Evans-Krylov theory, we have
$$|D^2 u|_{C^{\alpha}(B_R)}\leq C(n, \alpha)\frac{|D^2 u|_{C^{0}(B_{2R})}}{R^{\alpha}}\leq \frac{C(n, \alpha)}{R^{\alpha}}.$$
Hence, we obtain our theorem by letting $R\rightarrow +\infty$.
\end{proof}

\section{The proof of Theorem \ref{main1}}

We first recall the following lemma which is a special case (with $f=1$) of Lemma 3 in \cite{Qiu} and
here we give a quick and simple proof along the line of of Lemma 3 in \cite{Qiu} for this special case,
which results in a refined form:
\begin{lemma}\label{le4}
If $u \in C^4(\mathbb{R}^3)$ is a 2-convex solution of the equation \eqref{SQ} in $\mathbb{R}^3$, we have
\begin{eqnarray*}
\sigma_{2}^{ij}(\log \Delta u(x))_{ij}\geq \frac{1}{25}\sigma_{2}^{ij}(\log \Delta u(x))_{i}(\log \Delta u(x))_{j}.
\end{eqnarray*}
\end{lemma}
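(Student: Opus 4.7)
The plan is to reduce the stated inequality to a pointwise algebraic statement in the third derivatives of $u$ obtained by twice differentiating $\sigma_2(D^2u)=1$, and then to verify this algebraic inequality using the linear constraint coming from the first differentiation together with the special structure of totally symmetric three-tensors in dimension three.

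At any given point I diagonalize $D^2u$ by an orthonormal change of coordinates so that $u_{ij}=\lambda_i\delta_{ij}$ with $\lambda_1\geq\lambda_2\geq\lambda_3$. Two-convexity and $\sigma_2(D^2u)=1$ imply $w_i:=\sigma_1-\lambda_i>0$, and $\sigma_2^{ij}$ is then diagonal with entries $w_i$. Writing $h=\Delta u=\sigma_1$ and expanding $(\log h)_{ij}=h_{ij}/h-h_ih_j/h^2$, the target inequality becomes
$$25\,h\,\sigma_2^{ii}h_{ii}\;\geq\;26\sum_i w_i h_i^2.$$
Differentiating the equation once in $x_k$ yields $\sigma_2^{ij}u_{ijk}=0$, which in the diagonal frame reads $\sum_i w_i u_{iik}=0$ for every $k$. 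Differentiating a second time, summing over $k$, and applying the identity $\sigma_2^{ij,pq}B_{ij}B_{pq}=(\mathrm{tr}\,B)^2-\|B\|^2$ valid for symmetric $B$, one obtains
$$\sigma_2^{ii}h_{ii}\;=\;\sum_k\Big(\sum_{i,j}u_{ijk}^2-h_k^2\Big)\;=\;\sum_{k,\,i\neq j}\big(u_{ijk}^2-u_{iik}u_{jjk}\big),\qquad h_k=\sum_i u_{iik}.$$

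The remaining task is the algebraic inequality
$$25\,h\sum_{k,\,i\neq j}\big(u_{ijk}^2-u_{iik}u_{jjk}\big)\;\geq\;26\sum_k w_k h_k^2$$
for every totally symmetric tensor $(u_{ijk})$ satisfying $\sum_i w_iu_{iik}=0$ for each $k$. For each fixed $k$, Lemma \ref{le1} applied with $\xi_{ij}=u_{ijk}$ and $\eta=0$ gives a lower bound on $-\sum_{i\neq j}u_{iik}u_{jjk}$ in terms of $u_{11k}^2$, and the constraint alone yields $\sum_i u_{iik}^2\geq\tfrac{\sigma_1^2-1}{\sigma_1^2-3}h_k^2$. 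These diagonal bounds are insufficient when $\lambda_3$ is very negative and $w_3$ is large, but a feature special to $n=3$ is that total symmetry of $(u_{ijk})$ couples the three matrices $A_k=(u_{ijk})_{i,j}$: each off-diagonal entry of $A_k$ equals either $u_{123}$ (common to all three matrices) or a diagonal entry of some other $A_\ell$. Following Qiu's scheme (Lemma~3 in \cite{Qiu}), I would regroup the contributions across $(i,j,k)$ and apply elementary Cauchy--Schwarz in the constrained subspace to harvest the off-diagonal contributions, closing the inequality with the non-sharp constant $\tfrac{1}{25}$.

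The main obstacle is this algebraic step. The estimate must be uniform over the seven-dimensional space of admissible tensors (the ten independent components of a totally symmetric three-tensor modulo three linear constraints) and over all $\lambda\in\mathbb{R}^3$ with $\sigma_2(\lambda)=1$ and $w_i>0$. A term-by-term diagonal bound degenerates as $\lambda_3\to-\infty$, so the compensating off-diagonal contributions (which are forced to be nonzero by the cross-coupling of the $A_k$'s under total symmetry in $n=3$) must be harvested carefully. The constant $\tfrac{1}{25}$ is safely below the sharp one (which is $\tfrac{1}{3}$ in the extremal case) and provides the slack needed for the bookkeeping.
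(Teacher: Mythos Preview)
Your reduction is correct: after diagonalizing and differentiating twice, the claim indeed becomes the algebraic inequality
\[
25\,\sigma_1\sum_{k}\sum_{i\neq j}\big(u_{ijk}^2-u_{iik}u_{jjk}\big)\;\geq\;26\sum_k w_k\Big(\sum_i u_{iik}\Big)^2
\]
for totally symmetric $(u_{ijk})$ subject to $\sum_i w_iu_{iik}=0$. But this is precisely the hard step, and you have not done it. Saying that you ``would regroup the contributions \ldots\ and apply elementary Cauchy--Schwarz'' and that $1/25$ ``provides the slack needed for the bookkeeping'' is not a proof; you yourself flag this as ``the main obstacle'' and explain why a per-direction application of Lemma~\ref{le1} fails. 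The paper does \emph{not} use Lemma~\ref{le1} here, nor a generic Cauchy--Schwarz.

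What the paper actually does is this. Use the constraint $\sum_i w_iu_{iik}=0$ to eliminate each $u_{kkk}$ in favor of $\{u_{iik}:i\neq k\}$. After elimination, the full expression (including the off-diagonal contributions $u_{ijk}^2$, which by total symmetry in $n=3$ are exactly the six quantities $u_{iik}^2$ for $i\neq k$ plus $u_{123}^2$) splits by symmetry into three decoupled quadratic forms, one in $(u_{221},u_{331})$, one in $(u_{112},u_{332})$, one in $(u_{113},u_{223})$, plus a nonnegative leftover $6u_{123}^2/\sigma_1$. Each quadratic form has explicit coefficients in $(\lambda_1,\lambda_2,\lambda_3,\epsilon)$; positivity is then a discriminant check that reduces, after repeatedly using $\lambda_1\lambda_2+\lambda_2\lambda_3+\lambda_3\lambda_1=1$, to an explicit polynomial inequality in $\lambda_2,\lambda_3$ which is verified by hand for $\epsilon=1/25$ (in fact for any $0\le\epsilon\le\sqrt{101}-10$). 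The point is that the total-symmetry coupling does not merely ``compensate'' --- it makes the problem \emph{separate} into three $2\times2$ quadratic forms once the $u_{iii}$ are eliminated, and this is what must be carried out.
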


\begin{proof}
Assume $\{u_{ij}\}$ is diagonal and $\{u_{ij}\}=diag\{\lambda_1, \lambda_2, \lambda_3\}$.
Differential the equation \eqref{SQ} in $k$-th variable,
\begin{eqnarray}\label{3-1}
\sigma_{2}^{ij}u_{ijk}=0.
\end{eqnarray}
Taking twice derivative of the equation \eqref{SQ},
\begin{eqnarray*}
\sigma_{2}^{ij, pq}u_{ijk}u_{pqk}+\sigma_{2}^{ij}u_{ijkk}=0,
\end{eqnarray*}
which means
\begin{eqnarray}\label{3-2}
\sigma_{2}^{ij}u_{ijkk}=\sum_{i\neq j}u^{2}_{ijk}-\sum_{i\neq j}u_{iik}u_{jjk}.
\end{eqnarray}
Using \eqref{3-2}, we obtain
\begin{eqnarray*}
\Lambda:&=&\sigma_{2}^{ij}(\log \Delta u(x))_{ij}-\epsilon\sigma_{2}^{ij}(\log \Delta u(x))_{i}(\log \Delta u(x))_{j}
\\&=&\frac{\sigma_{2}^{ij}(\Delta u)_{ij}}{\sigma_1}
-\frac{(1+\epsilon)\sigma_{2}^{ii}(\sum_{k}u_{kki})^2}{\sigma_{1}^{2}}
\\&=&\frac{\sum_{i}(\sum_{k\neq p}u^{2}_{kpi}-\sum_{k\neq p}u_{kki}u_{ppi})}{\sigma_1}
-\frac{(1+\epsilon)\sigma_{2}^{ii}(\sum_{k}u_{kki})^2}{\sigma_{1}^{2}},
\end{eqnarray*}
where $\epsilon$ will be chosen later.
Noticing that
\begin{eqnarray*}
\sum_{i}\sum_{k\neq p}u^{2}_{kpi}
&=&2\sum_{k\neq p}u^{2}_{kpp}+\sum_{k\neq p\neq i}u^{2}_{kpi}
\\&=&2\sum_{k\neq p}u^{2}_{kpp}+6u^{2}_{123}
\\&\geq&2(u^{2}_{211}+u^{2}_{311}+u^{2}_{122}+u^{2}_{322}+u^{2}_{133}+u^{2}_{233}),
\end{eqnarray*}
\begin{eqnarray}\label{s-1}
\sum_{i}\sum_{k\neq p}u_{kki}u_{ppi}
&=&2\sum_{k\neq p}u_{kkp}u_{ppp}+\sum_{k\neq p\neq i}u_{kki}u_{ppi}
\nonumber\\&=&2\sum_{k\neq p}u_{kkp}u_{ppp}+2u_{113}u_{223}
+2u_{112}u_{332}+2u_{221}u_{331},
\end{eqnarray}
and
\begin{eqnarray}\label{s-2}
\sigma_{2}^{ii}\sum_{k}(u_{kki})^2
=\frac{(\sum_{k\neq i}\sigma_{2}^{ii}u_{kki}-\sigma_{2}^{ii}u_{iii})^2}{\sigma_{2}^{ii}}.
\end{eqnarray}
Then, using \eqref{3-1} to substitute terms with $u_{iii}$ in \eqref{s-1} and \eqref{s-2}, we have
\begin{eqnarray*}
\Lambda&\geq&\frac{2(u^{2}_{211}+u^{2}_{311}+u^{2}_{122}+u^{2}_{322}+u^{2}_{133}+u^{2}_{233})}{\sigma_1}
\\&&-\frac{2(u_{221}+u_{331})(-\sigma^{22}_{2}u_{221}-\sigma^{33}_{2}u_{331})}{\sigma_1\sigma_{2}^{11}}
\\&&-\frac{2(u_{112}+u_{332})(-\sigma^{11}_{2}u_{112}-\sigma^{33}_{2}u_{332})}{\sigma_1\sigma_{2}^{22}}
\\&&-\frac{2(u_{113}+u_{223})(-\sigma^{11}_{2}u_{113}-\sigma^{22}_{2}u_{223})}{\sigma_1\sigma_{2}^{33}}
\\&&-\frac{2(u_{113}u_{223}
+u_{112}u_{332}+u_{221}u_{331})}{\sigma_{1}}
\\&&-\sum_{i}\frac{(1+\epsilon)[\sum_{k\neq i}(\sigma_{2}^{ii}-\sigma_{2}^{kk})u_{kki}]^2}
{\sigma_{1}^{2}\sigma_{2}^{ii}}.
\end{eqnarray*}
Due to symmetry, we only to give the lower bound of the terms which
contain $u_{221}$ and $u_{331}$. We denote these terms by $\Lambda_1$
\begin{eqnarray*}
\Lambda_1&=&\frac{2(u^{2}_{311}+u^{2}_{221})}{\sigma_1}+\frac{2\sigma^{22}_{2}u_{221}^2}{\sigma_1\sigma_{2}^{11}}
+\frac{2\sigma^{33}_{2}u_{331}^2}{\sigma_1\sigma_{2}^{11}}
\\&&+\frac{2(\sigma^{22}_{2}+\sigma^{33}_{2})u_{221}u_{331}}{\sigma_1\sigma_{2}^{11}}-\frac{2u_{221}u_{331}}{\sigma_{1}}
\\&&-\frac{(1+\epsilon)[(\lambda_2-\lambda_1)u_{221}+(\lambda_3-\lambda_1)u_{331}]^2}
{\sigma_{1}^{2}\sigma_{2}^{11}}.
\end{eqnarray*}
Then, we can rewrite $\Lambda_1$ as a quadratic polynomial of $u_{221}$ and $u_{331}$:
\begin{eqnarray*}
\sigma_{1}^{2}\sigma_{2}^{11}\Lambda_1&=&
[2\sigma_1(\sigma_1+\lambda_3)-(1+\epsilon)(\lambda_1-\lambda_2)^2]u_{221}^2
\\&&+[2\sigma_1(\sigma_1+\lambda_2)-(1+\epsilon)(\lambda_1-\lambda_3)^2]u_{331}^2
\\&&+2[2\sigma_1 \lambda_1-(1+\epsilon)(\lambda_2-\lambda_1)(\lambda_3-\lambda_1)]u_{221}u_{331}.
\end{eqnarray*}
To show $\Lambda_1\geq 0$, we need to check
\begin{eqnarray*}
&&[2\sigma_1(\sigma_1+\lambda_3)-(1+\epsilon)(\lambda_1-\lambda_2)^2]\\&\times&
[2\sigma_1(\sigma_1+\lambda_2)-(1+\epsilon)(\lambda_1-\lambda_3)^2]\\&\geq&[2\sigma_1 \lambda_1-(1+\epsilon)(\lambda_2-\lambda_1)(\lambda_3-\lambda_1)]^2.
\end{eqnarray*}
Clearly, using $\lambda_1\lambda_2+\lambda_2\lambda_3+\lambda_3\lambda_1=1$, we have
\begin{eqnarray*}
&&2\sigma_1(\sigma_1+\lambda_3)-(1+\epsilon)(\lambda_1-\lambda_2)^2\\&=&
(1-\epsilon)\lambda_{1}^{2}+(1-\epsilon)\lambda_{2}^{2}+4\lambda_{3}^{2}+2\epsilon\lambda_{1}\lambda_{2}+6,
\end{eqnarray*}
\begin{eqnarray*}
&&2\sigma_1(\sigma_1+\lambda_2)-(1+\epsilon)(\lambda_1-\lambda_3)^2\\&=&
(1-\epsilon)\lambda_{1}^{2}+(1-\epsilon)\lambda_{3}^{2}+4\lambda_{2}^{2}+2\epsilon\lambda_{1}\lambda_{3}+6
\end{eqnarray*}
and
\begin{eqnarray*}
&&2\sigma_1 \lambda_1-(1+\epsilon)(\lambda_2-\lambda_1)(\lambda_3-\lambda_1)
\\&=&(1-\epsilon)\lambda_{1}^{2}-2(2+\epsilon)\lambda_{2}\lambda_{3}+3+\epsilon.
\end{eqnarray*}
So we only need to check
\begin{eqnarray*}
&&[(1-\epsilon)\lambda_{1}^{2}+3+\epsilon]
[(5-\epsilon)\lambda_{2}^{2}+(5-\epsilon)\lambda_{3}^{2}
-2\epsilon\lambda_{2}\lambda_{3}+6]\nonumber\\&&
+[(1-\epsilon)\lambda_{2}^{2}+4\lambda_{3}^{2}+
2\epsilon\lambda_{1}\lambda_{2}+3-\epsilon][(1-\epsilon)\lambda_{3}^{2}+4\lambda_{2}^{2}+
2\epsilon\lambda_{1}\lambda_{3}+3-\epsilon]
\nonumber\\&&\geq-4(2+\epsilon)[(1-\epsilon)\lambda_{1}^{2}+3+\epsilon]\lambda_2\lambda_3
+4(2+\epsilon)^2\lambda_{2}^{2}\lambda_{3}^{2},
\end{eqnarray*}
which is equivalent to
\begin{eqnarray}\label{la}
&&[(1-\epsilon)\lambda_{1}^{2}+3+\epsilon]
[(5-\epsilon)\lambda_{2}^{2}+(5-\epsilon)\lambda_{3}^{2}
+2(4+\epsilon)\lambda_{2}\lambda_{3}+6]\nonumber\\&&
+[(1-\epsilon)\lambda_{2}^{2}+4\lambda_{3}^{2}+
3-\epsilon][(1-\epsilon)\lambda_{3}^{2}+4\lambda_{2}^{2}+3-\epsilon]
\nonumber\\&&+
2\epsilon\lambda_{1}\lambda_{2}[(1-\epsilon)\lambda_{3}^{2}+4\lambda_{2}^{2}+
3-\epsilon]\nonumber\\&&
+2\epsilon\lambda_{1}\lambda_{3}[(1-\epsilon)\lambda_{2}^{2}+4\lambda_{3}^{2}+
3-\epsilon]\nonumber\\&&\geq4(2+\epsilon)^2\lambda_{2}^{2}\lambda_{3}^{2}
-4\epsilon^2\lambda_{1}^{2}\lambda_{2}\lambda_{3}.
\end{eqnarray}
Firstly, if we choose $0\leq\epsilon\leq\frac{1}{3}$, we get
\begin{eqnarray}\label{la-1}
&&[(1-\epsilon)\lambda_{1}^{2}+3+\epsilon]
[(5-\epsilon)\lambda_{2}^{2}+(5-\epsilon)\lambda_{3}^{2}
+2(4+\epsilon)\lambda_{2}\lambda_{3}+6]\nonumber\\&&\geq[(1-\epsilon)\lambda_{1}^{2}+3+\epsilon]
[(1-2\epsilon)\lambda_{2}^{2}+(1-2\epsilon)\lambda_{3}^{2}
+6]\nonumber\\&&\geq(1-\epsilon)(1-2\epsilon)\lambda_{1}^{2}
[\lambda_{2}^{2}+\lambda_{3}^{2}
]\nonumber\\&&\geq-4\epsilon^2\lambda_{1}^{2}\lambda_{2}\lambda_{3}.
\end{eqnarray}
Secondly, using $\lambda_{1}\lambda_{2}+\lambda_{1}\lambda_{3}=-\lambda_{2}\lambda_{3}$
to substitute $\lambda_{1}\lambda_{2}+\lambda_{1}\lambda_{3}$ with $\lambda_{2}\lambda_{3}$, we have
\begin{eqnarray}\label{la-2}
&&2\epsilon\lambda_{1}\lambda_{2}[(1-\epsilon)\lambda_{3}^{2}+4\lambda_{2}^{2}+
3-\epsilon]\nonumber\\&&
+2\epsilon\lambda_{1}\lambda_{3}[(1-\epsilon)\lambda_{2}^{2}+4\lambda_{3}^{2}+
3-\epsilon]\nonumber\\&=&2\epsilon(3-\epsilon)(1-\lambda_{2}\lambda_{3})
+2\epsilon(1-\epsilon)(1-\lambda_{2}\lambda_{3})\lambda_{2}\lambda_{3}\nonumber\\&&+
8\epsilon(1-\lambda_{2}\lambda_{3})(\lambda_{2}^{2}-\lambda_{2}\lambda_{3}+\lambda_{3}^{2}).
\nonumber\\&=&2\epsilon(3-\epsilon)+4\epsilon[2\lambda_{2}^2+2\lambda_{3}^2-3\lambda_{2}\lambda_{3}]
\nonumber\\&&-2\epsilon\lambda_{2}\lambda_{3}[4\lambda_{2}^2+4\lambda_{3}^2-(3+\epsilon)\lambda_{2}\lambda_{3}]
\nonumber\\&\geq&4\epsilon\lambda_{2}\lambda_{3}
+2\epsilon\lambda_{2}\lambda_{3}[4\lambda_{2}^2+4\lambda_{3}^2-(3+\epsilon)\lambda_{2}\lambda_{3}]
\nonumber\\&=&4\epsilon\lambda_{2}\lambda_{3}-2\epsilon(3+\epsilon)\lambda_{2}^2\lambda_{3}^2
+8\epsilon\lambda_{2}\lambda_{3}(\lambda_{2}^2+\lambda_{3}^2).
\end{eqnarray}
Lastly, if we choose $0\leq\epsilon\leq \sqrt{101}-10\approx 0.049$, we have
\begin{eqnarray*}
&&[(1-\epsilon)\lambda_{2}^{2}+4\lambda_{3}^{2}+
3-\epsilon][(1-\epsilon)\lambda_{3}^{2}+4\lambda_{2}^{2}+3-\epsilon]
\\&\geq&(3-\epsilon)(5-\epsilon)(\lambda_{2}^2+\lambda_{3}^2)
+[16+(1-\epsilon)^2]\lambda_{2}^{2}\lambda_{3}^{2}+4(1-\epsilon)(\lambda_{2}^4+\lambda_{3}^4)
\\&\geq&(3-\epsilon)(5-\epsilon)(\lambda_{2}^2+\lambda_{3}^2)
+[16+(1-\epsilon)^2-8\epsilon]\lambda_{2}^{2}\lambda_{3}^{2}+4\epsilon(\lambda_{2}^2+\lambda_{3}^2)^2.
\\&\geq&4\epsilon\lambda_{2}\lambda_{3}-2\epsilon(3+\epsilon)\lambda_{2}^2\lambda_{3}^2
+4(2+\epsilon)^2\lambda_{2}^{2}\lambda_{3}^{2}+8\epsilon\lambda_{2}\lambda_{3}(\lambda_{2}^2+\lambda_{3}^2),
\end{eqnarray*}
which together with \eqref{la-1} and \eqref{la-1} implies \eqref{la} holds true, if we choose $\epsilon=\frac{1}{25}$.
\end{proof}

Then, we have the following estimate.
\begin{lemma}\label{le5}
Let $\Omega$ be a bounded open set in $\mathbb{R}^3$, we consider the Drichlet problem of the 2-Hessian equation
\begin{equation}\label{SQ-}
\left\{
\begin{aligned}
&\sigma_2(D^2u(x))=1, \quad x \in \Omega; \\
&u=0, \quad x \in \partial\Omega.
\end{aligned}
\right.
\end{equation}
Then, for any 2-convex solution $u \in C^4(\Omega)\cap C(\overline{\Omega})$, we have the following estimate
of Pogorelov type ,
\begin{eqnarray}\label{est}
\max_{x \in \Omega}(-u(x))^{\alpha}\max\{|D^2 u(x)|, 1\}\leq C
\end{eqnarray}
for sufficiently large $\alpha>0$. Here $\alpha$ and
 $C$ only depend on $n$, the diameter of the domain $\Omega$.
\end{lemma}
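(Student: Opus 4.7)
The plan is to run a Pogorelov-type maximum principle argument on
$$P(x) = \alpha \log(-u(x)) + \log \max\{\Delta u(x),1\} + \frac{1}{2}|x|^2,$$
mimicking the proof of Lemma \ref{le3} but with Lemma \ref{le4} replacing the chain of inequalities (Corollary \ref{co1} plus Lemma \ref{le2}) that previously forced the hypothesis $\sigma_3(D^2u)\geq -A$. Since $n=3$ and $u$ is 2-convex, every eigenvalue $\lambda_i$ of $D^2u$ satisfies $|\lambda_i|<\sigma_1(D^2u)=\Delta u$, so an estimate on $\Delta u$ controls $|D^2u|$, and it suffices to bound $(-u)^{\alpha}\max\{\Delta u,1\}$.

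After translating so that $\Omega$ contains the origin and comparing against the radial subsolution $w(x)=\frac{1}{\sqrt{12}}|x|^2-a$ to bound $|u|$ in terms of the diameter $D$, I examine an interior maximum point $x_0$ of $P$; I may assume $\Delta u(x_0)>1$, for otherwise $P$ is already bounded by $\alpha\log D+\frac{1}{2}D^2$. Choosing a frame that diagonalises $\{u_{ij}\}$ at $x_0$, the first-order condition $\nabla P=0$ reads
$$(\log \Delta u)_i = -\frac{\alpha u_i}{u}-x_i,$$
and, using $\sigma_2^{ij}u_{ij}=2\sigma_2=2$ together with $\sum_i\sigma_2^{ii}=2\sigma_1$, the inequality $\sigma_2^{ij}P_{ij}\leq 0$ becomes
$$0\geq \frac{2\alpha}{u}-\frac{\alpha\,\sigma_2^{ii}u_i^2}{u^2}+\sigma_2^{ij}(\log\Delta u)_{ij}+2\sigma_1.$$

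The decisive move is now to invoke Lemma \ref{le4} to bound the third term below by $\frac{1}{25}\sigma_2^{ii}((\log\Delta u)_i)^2$, and then to use the first-order identity together with Cauchy-Schwarz in the form
$$\frac{u_i^2}{u^2}\leq \frac{2}{\alpha^2}((\log\Delta u)_i)^2+\frac{2}{\alpha^2}x_i^2,$$
to arrive at
$$0\geq \frac{2\alpha}{u}+\Big(\frac{1}{25}-\frac{2}{\alpha}\Big)\sigma_2^{ii}((\log\Delta u)_i)^2-\frac{2}{\alpha}\sigma_2^{ii}x_i^2+2\sigma_1.$$
Choosing $\alpha\geq 50$ makes the gradient coefficient nonnegative so the second term can be dropped, and bounding $\sigma_2^{ii}x_i^2\leq 2\sigma_1 D^2$ shows that for $\alpha\geq 8D^2$ the $x$-term is absorbed into $2\sigma_1$. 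What remains is $0\geq \frac{2\alpha}{u}+\sigma_1$, i.e.\ $(-u(x_0))\Delta u(x_0)\leq 2\alpha$.

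Since $-u$ is bounded by the diameter, this forces $P(x_0)\leq (\alpha-1)\log D+\log(2\alpha)+\frac{1}{2}D^2\leq C(\alpha,D)$; as $P$ attains its maximum at $x_0$, the bound propagates to all of $\Omega$ and yields the claimed Pogorelov-type estimate. The main obstacle is precisely the absorption step in the key inequality: it goes through here only because Lemma \ref{le4} gives a \emph{strictly positive} constant $\frac{1}{25}$ on the right-hand side (rather than $0$), which a sufficiently large $\alpha$ can beat, and this is exactly the quantitative improvement over the general-$n$ situation that renders the hypothesis $\sigma_3\geq -A$ unnecessary when $n=3$.
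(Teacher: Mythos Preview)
Your proof is correct and follows essentially the same route as the paper's: the same auxiliary function $P$, the same comparison with the radial barrier $w=\tfrac{1}{2\sqrt{3}}|x|^2-a$, the same application of Lemma~\ref{le4} at the interior maximum, and the same absorption argument after choosing $\alpha\geq 50$. Your write-up is in fact slightly more explicit than the paper's in spelling out the bound $\sigma_2^{ii}x_i^2\leq 2\sigma_1 D^2$ and the final arithmetic, but the logic is identical.
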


\begin{proof}
First, we translate the coordinate system such that $\Omega$ contains the coordinate origin.
Similar to the argument in Lemma \ref{le3}, using the Comparison principle
(see Theorem 17.1 in Page 443 of \cite{GT}), there exists the function
$$w=\frac{1}{2\sqrt{3}}|x|^2-a$$
such that
$$w\leq u\leq 0,$$
where $a$ depends on the diameter of the domain.
Thus, $|u|$ can be controlled by the diameter of the domain $\Omega$.

Since $u$ is 2-convex, then $\sigma_{2}^{ij}$ is positive definite. So the Hessian estimates can
be reduced to the estimate of $\Delta u$ due to the following fact
$$\max_{\xi \in \mathbb{S}^{n-1}}|u_{\xi\xi}|\leq \Delta u.$$
We consider the auxiliary function in $\Omega$
\begin{eqnarray}\label{P}
P(x, \xi)=\alpha\log (-u)+\log \max\{\Delta u, 1\}+\frac{1}{2}|x|^2.
\end{eqnarray}
Since $u=0$ on $\partial \Omega$, so the maximum of $P$ is attained
in some interior point $x_0$ of $\Omega$. Assume $\Delta u\geq 2$ at $x_0$
(otherwise our lemma holds true automatically). Thus at $x_0$
\begin{eqnarray}\label{1-diff-}
0=P_i=\frac{\alpha u_i}{u}+(\log \Delta u)_i+ x_i.
\end{eqnarray}
Contracting
\begin{eqnarray*}
P_{ij}=\frac{\alpha u_{ij}}{u}-\frac{\alpha u_i u_j}{u^2}
+(\log \Delta u)_{ij}+\delta_{ij}
\end{eqnarray*}
with $\sigma_{2}^{ij}$ gives at $x_0$
\begin{eqnarray*}
0\geq \sigma_{2}^{ij}P_{ij}&=&\frac{2\alpha}{u}
-\frac{\alpha\sigma_{2}^{ii}u_{i}^2}{u^2}
+\sigma_{2}^{ij}(\log \Delta u)_{ij}+2\sigma_1
\end{eqnarray*}
in view of $\sigma_2=1$. Using \eqref{1-diff-}, we have at $x_0$
\begin{eqnarray*}
-(\frac{u_i}{u})^2\geq -\frac{2}{\alpha^2}((\log \Delta u)_i)^2
-\frac{2}{\alpha^2}( x_i)^2,
\end{eqnarray*}
which implies together with Lemma \ref{le4}
\begin{eqnarray*}
\sigma_{2}^{ij}P_{ij}&\geq&\frac{2\alpha }{u}+(\frac{1}{25}
-\frac{2}{\alpha})\sigma_{2}^{ii}((\log \Delta u)_i)^2
-\frac{2}{\alpha} \sigma_{2}^{ii}x_{i}^2
+2\sigma_1.
\end{eqnarray*}
If we choose $\alpha\geq 50$,
\begin{eqnarray*}
\frac{1}{25}-\frac{2}{\alpha}\geq0,
\end{eqnarray*}
which implies at $x_0$
\begin{eqnarray*}
\sigma_{2}^{ij}P_{ij}&\geq&\frac{2\alpha }{u}
-\frac{2}{\alpha} \sigma_{2}^{ii}x_{i}^2
+2\sigma_1.
\end{eqnarray*}
Then, if we choose $\alpha$ again large enough, we have we obtain at $x_0$
\begin{eqnarray*}
0\geq\sigma_{2}^{ij}P_{ij}\geq \frac{2\alpha }{u}
+\Delta u.
\end{eqnarray*}
So, we obtain our Lemma.
\end{proof}

Using the above Lemma, following almost the same proof as Theorem \ref{main}, we can obtain Theorem
\ref{main1}.

\vspace{0.5 cm}

\vspace {1cm}


\begin{thebibliography}{50}
\setlength{\itemsep}{-0pt} \small

\bibitem{BC} J. Bao, J. Chen, B. Guan and M. Ji, {\em Liouville property and regularity of a Hessian quotient
equation}, Amer. J. Math., (2) {\bf 125} (2003), 301-316.

\bibitem{Ca} E. Calabi, {\em Improper affine hyperspheres of convex type and a generalization of a theoem by K.
J$\ddot{o}$rgens}, Michigen Math. J., {\bf5} (1958), 105-126.

\bibitem{Ca1} L. A. Caffarelli, {\em Interior $W^{2,p}$ estimates for solutions of the Monge-Amp\`ere equation}, Ann. of
Math., (2) {\bf 131} (1990), 135-150.

\bibitem{Ca2} L. A. Caffarelli, {\em A localization property of viscosity solutions to the Monge-Amp\`re equation
and their strict convexity}, Ann. of Math., (2) {\bf 131} (1990), 129-134.


\bibitem{CL} L. Caffarelli, and Y. Li, {\em An extension to a theorem of J$\ddot{o}$gens, Calabi, and Pogorelov}, Comm.
Pure Appl. Math., (5) {\bf 56} (2003), 549-583.

\bibitem{CNS} L. Caffarelli, L. Nirenberg, and J. Spruck, {\em The Dirichlet problem for nonlinear second order elliptic
equations III. Functions of the eigenvalues of the Hessian}, Acta Math., {\bf 155} (1985), 261-301.

\bibitem{CY} S. Y. A. Chang and Y. Yuan, {\em A Liouville problem for sigma-2 equation}, Discrete Contin. Dyn.
Syst., (2) {\bf 28} (2010), 659-664.

\bibitem{CQ} C. Chen, {\em Optimal concavity of some hessian operators and the prescribed $\sigma_2$ curvature measure problem},
Science China Mathematics, (3) {\bf 56}, (2013), 639-651.

\bibitem{CYau} S.Y. Cheng and S.T. Yau, {\em Complete affine hypersurfaces, part I. The completeness of affine
metrics}, Comm. Pure Appl. Math., {\bf 39} (1986), 839-866.


\bibitem{DK} S. Dinew, S, Kolodziej, {\em Liouville and Calabi-Yau type theorems for complex Hessian equations}, American Journal of Mathematics, (2) {\bf 139} (2017), 403-415.

\bibitem{Fu} Lei Fu, {\em An analogue of Bernstein's theorem}, Houston J. Math., (3) {\bf 24} (1998), 415-419.

\bibitem{GG} B. Guan and P. F. Guan, {\em Convex hypersurfaces of prescribed curvature}, Ann. of Math., {\bf 156} (2002),
655-673.

\bibitem{GT} D. Gilbarg and N. Trudinger, {\em Elliptic partial differential equations of second order}, Springer, 1998.

\bibitem{Guan1} P. F. Guan and G. H. Qiu, {\em Interior $C^2$ regularity of convex solutions to prescribing
scalar curvature equations}, arXiv:1711.00932 (2017).

\bibitem{HL} R. Harvey and H. B. Jr., {\em Calibrated geometry}, Acta Math., {\bf 148} (1982), 47-157.

\bibitem{JX} J. Jost and Y. L. Xin, {\em A Bernstein theorem for special Lagrangian graphs},
Calc. Var. P. D. E., (3) {\bf 15} (2002),299-312.

\bibitem{Jo} K. J$\ddot{o}$gens, {\em $\ddot{U}$ber die L$\ddot{o}$sungen der Differentialgleichung $rs-t^2 = 1$},
Math. Ann., {\bf127} (1954), 130-134.

\bibitem{LRW} M. Li, C. Y. Ren and Z. Z. Wang,
{\em An interior estimate for convex solutions and a rigidity theorem},
Journal of Functional Analysis, (7) {\bf 270} (2016), 2691-2714.

\bibitem{LS} A. M. Li and L. Sheng, {\em A Liouville theorem on the PDE $det(f_{i\overline{j}})=1$}, arXiv:1809.00824.

\bibitem{Po1} A.V. Pogorelov, {\em On the improper convex affine hyperspheres}, Geom. Dedi., {\bf1} (1972), 33-46.

\bibitem{Po2} A.V. Pogorelov, {\em The Minkowski Multidimensional Problem}, John Wiley, 1978.

\bibitem{Qiu} G. H. Qiu, {\em Interior Hessian estimates for $\sigma_2$ equations in dimension three,}
arXiv:1711.00948 (2017).

\bibitem{Tr1} N. S. Trudinger, {\em On the Dirichlet problem for Hessian equations}, Acta Math., {\bf 175} (1995), 151-164.

\bibitem{TrW} N. S. Trudinger and X. J. Wang, {\em The Bernstein problem for affine maximal hypersurfaces}, Invent.
Math., (2) {\bf 140} (2000), 399-422.

\bibitem{Tr} N. S. Trudinger and X. J. Wang, {\em The Monge-Amp\`ere equation and its geometric applications},
Handbook of geometric analysis, International Press, (2008), Vol. I, 467-524.

\bibitem{Wa} Warren, Micah, {\em Nonpolynomial entire solutions to $\sigma_k$ equations}, Comm. Partial Differential Equations,
(5) {\bf 41} (2016), 848-853.

\bibitem{Yu} Yu Yuan, {\em A Bernstein problem for special Lagrangian equations}, Invent. Math., {\bf 150} (2002),
117-125.

\bibitem{Yu1} Yu Yuan, {\em Global solutions to special Lagrangian equations}, Proc. Amer. Math. Soc.,
(5) {\bf 134}, (2006), 1355-1358.

\end{thebibliography}
\end{document}